\newtheorem{theorem}{Theorem}[section]
\newtheorem{lemma}[theorem]{Lemma}
\newtheorem{corollary}[theorem]{Corollary}
\newtheorem{proposition}[theorem]{Proposition}
\theoremstyle{definition}
\newtheorem{definition}[theorem]{Definition}
\theoremstyle{remark}
\newtheorem{remark}[theorem]{Remark}
\numberwithin{equation}{section}
\begin{document}

\newcommand{\om}{\omega}
\newcommand{\si}{\sigma}
\newcommand{\la}{\lambda}
\newcommand{\ph}{\varphi}
\newcommand{\ep}{\varepsilon}
\newcommand{\tep}{\widetilde{\varepsilon}}
\newcommand{\al}{\alpha}
\newcommand{\sub}{\subseteq}
\newcommand{\hra}{\hookrightarrow}
\newcommand{\de}{\delta}

\
\newcommand{\RR}{{\mathbb{R}}}
\newcommand{\NN}{{\mathbb{N}}}
\newcommand{\DD}{{\mathbb{D}}}
\newcommand{\CC}{{\mathbb{C}}}
\newcommand{\ZZ}{{\mathbb{Z}}}
\newcommand{\T}{{\mathbb{T}}}
\newcommand{\TT}{{\mathbb{T}}}
\newcommand{\KK}{{\mathbb{K}}}
\newcommand{\px}{\partial X}
\newcommand{\cL}{{\mathcal{L}}}
\newcommand{\cA}{{\mathcal{A}}}
\newcommand{\cB}{{\mathcal{B}}}

\def\C{\mathbb C}
\def\R{\mathbb R}
\def\X{\mathbb X}
\def\U{\mathcal U}\def\M{\mathcal M}

\def\K{\mathbb K}
\def\P{\mathbb P}
\def\a{\mathfrak{A}}
\def\b{\mathfrak{B}}
\def\f{\mathfrak{F}}
\def\x{\mathfrak{X}}
\def\Z{\mathbb Z}
\def\P{\mathbb P}
\def\v{\vartheta_\alpha}
\def\va{{\varpi}_\alpha}
\def\I{\mathbb I}
\def\H{\mathbb H}
\def\Y{\mathbb Y}
\def\E{\mathbb E}
\def\N{\mathbb N}
\def\cal{\mathcal}

\title[Existence of Pseudo-Almost Automorphic Solutions]{Existence Results for Some Nonautonomous Integro-differential Equations}

\author{Toka Diagana}
\address{Department of Mathematics, Howard University, 2441 6th
Street N.W.,  Washington, D.C. 20059, USA}

\email{tdiagana@howard.edu}

\subjclass[2000]{39A24; 42A75; 35L10; 37L05; 34D09.}

\keywords{integro-differential equation; pseudo-almost automorphic; Schauder fixed point theorem; nonautonomous reaction-diffusion equation}

\maketitle

\begin{center}
{\it In Memory of Prof. Dialla KONAT\'{E}}
\end{center}

\begin{abstract} In this paper we make a subtle use of tools from operator theory and the Schauder fixed-point theorem to establish the existence of pseudo-almost automorphic solutions to some classes of nonautonomous integro-differential equations with pseudo-almost automorphic forcing terms.  To illustrate our main results, the existence of pseudo-almost automorphic solutions to a parabolic Neumann boundary value problem that models
population genetics
and nerve pulse propagation 
will be discussed.
\end{abstract}

\maketitle

\section{Introduction} 
Integro-differential equations play a crucial role in qualitative theory of differential equations due to their applications to natural phenomena, see, e.g.,
 \cite{HR, G, N, NO, V1, V2, V3}.
Much work has been done in recent years to investigate  
the existence of periodic, almost periodic, almost automorphic, pseudo-almost periodic, and pseudo-almost automorphic solutions to integro-differential equations, see, e.g., \cite{AG, TDbook, DE, DE1, DE2, H0, H1, H2, li1, li2, V1, V2, V3}. 
The existence of solutions to autonomous integro-differential equations in the above-mentioned function spaces is, to some extent, relatively well-understood. The method most widely used to deal with the existence of solutions to those
integro-differential equations is the so-called `method of resolvents', see, e.g., \cite{adez, da1, da2, H1, H2, li1, li2}.

Fix $\alpha \in (0, 1)$.
The purpose of this paper consists of making use of a new approach to study the existence of pseudo-almost automorphic solutions to the class of non-autonomous Volterra integro-differential equations given by
\begin{eqnarray}\label{PR}
\frac{d\ph}{dt} = A(t) \ph  + \int_{-\infty}^t C(t-s) \ph (s) ds + f(t,\ph),
\end{eqnarray}
where $A(t) : D \subset \X \mapsto \X$ is a family of closed unbounded linear operators on a Banach space $\X$ whose domains $D(A(t)) = D$ are constant in $t \in \R$, $C(t): D \subset \X \mapsto \X$ are (possibly unbounded) linear operators upon $\X$,
and
the function $f: \R \times \X_\alpha \mapsto \X$ is pseudo-almost automorphic in the first variable uniformly in the second one with $\X_\alpha$ being the real interpolation space of order $(\alpha, \infty)$ between $\X$ and $D$.

These types of equations arise very often in the study of natural phenomena in which a certain memory effect is taken into consideration \cite{HR, G, N}. In \cite{NO} for instance, equations of type Eq. (\ref{PR}) appeared in the study of heat conduction in materials with memory. 
The existence, uniqueness, maximal regularity, and asymptotic behavior of solutions to Eq. (\ref{PR}) have widely been studied, see, e.g., \cite{AA, CC, C, F2, HR, J, Lun2, Lun4, M}. However, to the best of our knowledge, the existence of pseudo-almost automorphic solutions to Eq. (\ref{PR}) is an untreated original problem with important applications, which constitutes the main motivation of our study.

In order to investigate the existence of pseudo-almost automorphic solutions to Eq. (\ref{PR}), we study the non-autonomous abstract differential equations involving the time-dependent linear operators $A(t)$ and the function $B(\cdot): \R \mapsto {\mathcal L}(C(\R, D), \X)$, that is,
\begin{eqnarray}\label{PR1}
\frac{d\ph}{dt} = A(t)\ph + B(t) \ph + f(t,\ph),
\end{eqnarray}
where $A(t): D \subset \X \mapsto \X$ is a family of closed linear operators on $\X$ with constant domains $D$ and the linear operators $B(t): C(\R, D)\mapsto \X$ defined by 
\begin{eqnarray}\label{PR2}B(t) \ph  := \int_{-\infty}^t C(t-s) \ph (s) ds, \ \ \ph \in C(\R, D)\end{eqnarray}with $C(\R, D)$ being the collection of all continuous functions from $\R$ into $D$.

In order to study the existence of solutions to Eq. (\ref{PR1}), we will make extensive use of exponential dichotomy tools, real interpolation spaces, and suppose that for all $\ph \in PAA(\X_\alpha)$, the function $t \mapsto B(t) \varphi$ belongs to $PAA(\X)$. Our existence result will then be obtained through the use of the well-known Schauder fixed-point theorem. Obviously, once we establish the sought existence results for Eq. (\ref{PR1}), then we can easily go back to Eq. (\ref{PR}) notably through Eq. (\ref{PR2}).

The notion of pseudo-almost automorphy is a powerful concept that has been introduced and studied in a series of recent papers by Liang {\it et al.}
\cite{L, LL, XJ}. Such a concept has recently generated several developments and these and related topics have recently been summarized in the new book by Diagana \cite{TDbook}. 

Some recent contributions on almost periodic and asymptotically
almost periodic solutions to integro-differential equations of the form 
Eq. (\ref{PR}) have recently been made in \cite{H1, H2} in the case $A(t) = A$ is constant. Similarly, in \cite{li2}, the existence of pseudo-almost automorphic solutions to an autonomous version of Eq. (\ref{PR}) was studied. The main method used in the above-mentioned papers are resolvents operators.
However, to the best of our knowledge, the existence of
pseudo-almost automorphic solutions to Eq. (\ref{PR}) (and hence to Eq. (\ref{PR1})) in the case when $A(t)$ are sectorial linear operators is an original untreated topic with some interesting
applications to the real world problems. Among other things, we will make
extensive use of the so-called Acquistpace-Terreni conditions method associated with sectorial operators $A(t)$ and the Schauder fixed point to
derive some sufficient conditions for the existence of
pseudo-almost automorphic (mild) solutions to \eqref{PR1} and then to Eq. (\ref{PR}). To illustrate our
main results, the existence of pseudo-almost automorphic solutions to a parabolic
Neumann boundary value problem that models
population genetics \cite{V2, V3}
and nerve pulse propagation \cite{V1}
will be discussed.

\section{Preliminaries} The basic results discussed in this section are mainly taken from the following recent papers by the author \cite{MCM} and \cite{PAMS}.

In this paper, $(\X, \|\cdot\|)$ denotes a Banach space. If $A$ is a linear operator upon $\X$, then the notations $D(A)$, $\rho(A)$,
$\sigma(A)$, $N(A)$, and $R(A)$ stand respectively for the domain, resolvent, spectrum, kernel, and the range of $A$.
Similarly, if $A: D= D(A) \subset \X \mapsto \X$ is a closed linear operator, one denotes its graph norm by $\|\cdot\|_{D}$ defined by
$\|\ph\|_D := \|\ph\| + \|A\ph\|$ for all $\ph \in D$. From the closedness of $A$, one can easily see that $(D, \|\cdot\|_{D})$ is a Banach space.
Moreover, one sets $R(\lambda, L) := (\lambda I - L)^{-1}$ for all
$\lambda \in \rho(A)$. 
Furthermore, we set $Q(t) =I-P(t)$ for projections
$P(t)$. If $\Y, \Z$ are Banach spaces, then the space $B(\Y, \Z)$ denotes the collection of all bounded
linear operators from $\Y$ into $\Z$ equipped with its natural
uniform operator topology $\|\cdot\|_{B(\Y, \Z)}$. We also set $B(\Y) = (\Y, \Y)$ whose corresponding norm will be denoted $\|\cdot\|$. 
If $K \subset \X$ is a subset, we let $\overline{co}\, K$ denote the closed convex hull of $K$. Additionally, 
$\T$ will denote the set defined by, $\T :=\{(t,s) \in \R \times \R: t \geq s\}.$

\subsection{Evolution Families}
\begin{definition}\cite{aq, at, TDbook, MCM, PAMS} \label{UT}
A family of closed linear operators
$A(t)$ for $t\in \R$ on $\X$ with domains $D(A(t))$ (possibly not
densely defined) is said to satisfy the so-called Acquistapace--Terreni
conditions, if there exist constants $\omega\in \R$, $\theta
\in (\frac{\pi}{2},\pi)$, $K, L \geq 0$ and $\mu, \nu \in (0,
1]$ with $\mu + \nu > 1$ such that
\begin{equation}\label{AT1}
  S_{\theta} \cup \{0\} \subset \rho(A(t)-\om I),\;\qquad
  \|R(\la,A(t)-\om I)\|\le \frac{K}{1+|\la|}, \ \, \mbox{and} \end{equation}
\begin{equation}\label{AT2}
\|(A(t)-\om I)R(\la,A(t)-\om I)\, [R (\om,A(t))-R (\om,A(s))]\|
  \le L\, |t-s|^\mu\,|\la|^{-\nu}
  \end{equation}
for $t,s\in\R$, $\displaystyle \la \in S_\theta$, where $$S_\theta:=
\{\la\in\CC\setminus\{0\}: |\arg \la|\le\theta\}.$$
\end{definition}
Among other things, the Acquistapace--Terreni conditions do ensure the existence of a unique evolution
family $${\mathcal U} = \Big\{U(t,s): t, s \in \R \ \ \mbox{such that} \ \ t \geq s\Big\}$$ on $\X$ associated with $A(t)$ 
such that $U(t, s)\X \sub D(A(t))$ for all $t, s \in \R$ with $t > s$,
and

\begin{enumerate}
\item[(a)]  $U(t,s)U(s,r)=U(t,r)$ for $t,s \in \R$ such that $t \geq s \geq s$;
\item[(b)] $U(t,t)=I$ for $t \in \R$ where $I$ is the identity operator of $\X$; and
\item[(c)] for $t > s$, the mapping $(t,s)\mapsto U(t,s)\in B(\X)$ is continuous and
continuously differentiable in $t$ with $\partial_t U (t,s) = A(t)U(t, s).$ Moreover, there exists a constant $C' > 0$ which depends on constants in Eq. (\ref{AT1}) and Eq. (\ref{AT2}) such that
\begin{eqnarray}\label{INe}
\|A^k(t) U(t,s)\| \leq C' (t-s)^{-k}
\end{eqnarray}
for $0 < t-s \leq 1$ and $k = 0, 1$.
\end{enumerate}

\begin{definition}\cite{TDbook} \label{expo}
An evolution family ${\mathcal U} = \{U(t,s): (t,s) \in \T\}$ is said to have an {\it exponential
  dichotomy} if there are projections
$P(t)$ ($t\in\R$) that are uniformly bounded and strongly
continuous in $t$ and constants $\delta>0$  and $N\ge1$ such that
\begin{enumerate}
\item[(d)] $U(t,s)P(s) = P(t)U(t,s)$; \item[(e)] the restriction
$U_Q(t,s):Q(s)\X\to Q(t)\X$ of $U(t,s)$ is
  invertible (we then set $U_Q(s,t):=U_Q(t,s)^{-1}$) where $Q(t) = I - P(t)$; and
\item[(f)] $\|U(t,s)P(s)\| \le Ne^{-\delta (t-s)}$ and
  $\|U_Q(s,t)Q(t)\|\le Ne^{-\delta (t-s)}$ for $t\ge s$ and $t,s\in \R$.
\end{enumerate}
\end{definition}
If an evolution family ${\mathcal U} = \{U(t,s): (t,s) \in \T\}$  has an exponential dichotomy, we then define 
$$
\Gamma (t,s) :=
\left\{
        \begin{array}{ll}
                U(t,s) P(s),  & \mbox{if } t \geq s, \ \ t,s \in \R, \\ \\
                -U_Q(t,s) Q(s), & \mbox{if } s > t, \ \ t, s \in \R.
        \end{array}
\right.
$$

\subsection{Estimates for $U(t,s)$}
This setting requires some estimates related to $U(t,s)$. 
For
that, we make extensive use of the real interpolation spaces of order $(\alpha, \infty)$ between $\X$ and $D(A(t))$, where $\alpha \in (0, 1)$. We refer
the reader to  Amann \cite{Am} and Lunardi \cite{Lun} for proofs and further information on theses
interpolation spaces.

Let $A$ be a sectorial operator on $\X$ (for that, in Definition \ref{UT},
replace $A(t)$ with $A$) and let $\alpha\in(0,1)$. Define the real
interpolation space
$$\displaystyle \X^A_{\alpha}: = \left\{x\in \X: \left\|x\right\|^A_{\alpha}:=
\sup\nolimits_{r>0}
\left\|r^{\alpha}(A-\omega)R(r,A-\omega)x\right\|<\infty
\right\},
$$ which, by the way, is a Banach space when endowed with the norm $\left\|\cdot
\right\|^A_{\alpha}$. For convenience we further write
$$\X_0^A:=\X,
\ \left\|x\right\|_0^A:=\left\|x\right\|, \ \X_1^A:=D(A)$$ and
$$\left\|x\right\|^A_{1}:=\left\|(\omega-A)x\right\|.$$ Moreover, let
$\hat{\X}^A:=\overline{D(A)}$ of $\X$. In particular, we have the
following continuous embedding
\begin{equation} \label{embeddings1}
\begin{split}
&D(A)\hookrightarrow \X^A_{\beta}\hookrightarrow
D((\omega-A)^{\alpha}) \hookrightarrow
\X^A_{\alpha}\hookrightarrow \hat{\X}^A \hookrightarrow \X,
\end{split}
\end{equation}
for all $0<\alpha<\beta<1$, where the fractional powers are
defined in the usual way.

In general, $D(A)$ is not dense in the spaces $\X_\alpha^A$ and
$\X$. However, we have the following continuous injection
\begin{equation*}\label{closure}
\X_\beta^A \hra \overline{D(A)}^{\|\cdot\|_\alpha^A}
\end{equation*}
for $0<\alpha <\beta <1$.

Given the family of linear operators $A(t)$ for $t\in \R$,
satisfying Acquistapace--Terreni conditions, we set
$$\X^t_\alpha:=\X_\alpha^{A(t)}, \quad
\hat{\X}^t:=\hat{\X}^{A(t)}$$ for $0\le \alpha\le 1$ and $t\in\R$,
with the corresponding norms. Then the embedding in Eq.
\eqref{embeddings1} holds with constants independent of $t\in\R$.
These interpolation spaces are of class
 $\mathcal{J}_{\alpha}$ (\cite[Definition 1.1.1 ]{Lun}) and
 hence there is a constant $c(\alpha)$ such that
 \begin{equation*}\label{J}
\left\|y\right\|_{\alpha}^t\leq c(\alpha)\left\|y\right\|^{1-\alpha} \left\|A(t)y\right\|^{\alpha}, \;\;\; y\in
 D(A(t)).
 \end{equation*}

We have the following estimates for the evolution
family $U(t,s)$.

\begin{proposition}\cite{W}\label{pes} Suppose the evolution family ${\U}$ has exponential dichotomy. 
For $x \in \X$, $ 0\leq \alpha \leq 1$ and $t > s,$ the following
hold:

\begin{enumerate}
\item[(i)] There is a constant $c(\alpha),$ such that 
 \begin{equation}\label{eq1.1}
  \left\|U(t,s)P(s)x\right\|_{\alpha}^t\leq
 c(\alpha)e^{- \frac{\delta}{2}(t-s)}(t-s)^{-\alpha} \left\|x\right\|.
  \end{equation}
\item[(ii)] There is a constant $m(\alpha),$ such that

 \begin{equation}\label{eq2.1}
 \left\|\widetilde{U}_{Q}(s,t)Q(t)x\right\|_{\alpha}^s\leq
 m(\alpha)e^{-\delta (t-s)}\left\|x\right\|.
 \end{equation}
 \end{enumerate}

\end{proposition}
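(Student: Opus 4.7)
The plan is to obtain both estimates from the fact that the interpolation spaces $\X^t_\alpha$ are of class $\mathcal{J}_\alpha$, i.e.\ the reiteration inequality
$$\|y\|^t_\alpha \le c(\alpha)\|y\|^{1-\alpha}\|A(t)y\|^\alpha,\qquad y\in D(A(t)),$$
holds with a constant independent of $t$. Applied to $y=U(t,s)P(s)x$ (or $y=\widetilde U_Q(s,t)Q(t)x$), this reduces each estimate to combining an exponential bound on $\|y\|$ provided by the exponential dichotomy (property (f)) with a smoothing bound on $\|A(\cdot)y\|$ provided by (\ref{INe}).

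For part (i), the $L^\infty$ bound $\|U(t,s)P(s)x\|\le Ne^{-\delta(t-s)}\|x\|$ is immediate from (f). To handle $\|A(t)U(t,s)P(s)x\|$, I would split along $t-s\le 1$ and $t-s>1$. On the first range, (\ref{INe}) with $k=1$ gives $\|A(t)U(t,s)\|\le C'(t-s)^{-1}$, and since the projections $P(s)$ are uniformly bounded we get
$$\|A(t)U(t,s)P(s)x\|\le C''(t-s)^{-1}\|x\|.$$
On the second range, I would use the evolution-family identity $U(t,s)=U(t,t-1)\,U(t-1,s)$ together with the intertwining $U(t-1,s)P(s)=P(t-1)U(t-1,s)$ from (d); applying (\ref{INe}) on the short interval $[t-1,t]$ and the dichotomy bound on $[s,t-1]$ yields
$$\|A(t)U(t,s)P(s)x\|\le C'Ne^{\delta}e^{-\delta(t-s)}\|x\|.$$
Inserting these two bounds into the reiteration inequality and choosing $\delta/2<\delta(1-\alpha)$ (in the long-range case the factor $(t-s)^{-\alpha}$ is absorbed into the exponential, while in the short-range case $e^{-\delta(1-\alpha)(t-s)}\le e^{-\delta(t-s)/2}$ up to a constant), one obtains the stated bound with a single constant $c(\alpha)$.

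For part (ii), the same strategy applies to $\widetilde U_Q(s,t)Q(t)x$: property (f) supplies the bound $Ne^{-\delta(t-s)}\|x\|$, and the intertwining $U_Q(s,t)Q(t)=Q(s)U_Q(s,t)$ together with the invertibility of $U(t,s)$ on the unstable range shows that $\widetilde U_Q(s,t)Q(t)$ maps $\X$ smoothly into $D(A(s))$, with $\|A(s)\widetilde U_Q(s,t)Q(t)x\|$ also enjoying exponential decay (one writes $A(s)\widetilde U_Q(s,t)Q(t)=A(s)\widetilde U_Q(s,s+1)Q(s+1)\,\widetilde U_Q(s+1,t)Q(t)$ and bounds the first factor by a constant, the second by $Ne^{-\delta(t-s-1)}$). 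Plugging these into the reiteration inequality now yields exponential decay with the full rate $\delta$ and \emph{no} singular factor $(t-s)^{-\alpha}$, reflecting the fact that the evolution on the unstable subspace is smooth in both time directions.

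The main technical obstacle is the derivation of the bound on $\|A(t)U(t,s)P(s)x\|$ that combines both the Yosida-type smoothing singularity $(t-s)^{-1}$ (only available for short times from (\ref{INe})) and exponential decay for large $t-s$: this is the step where the cocycle identity, the intertwining property (d), and the short-time smoothing estimate must be put together carefully, after which the interpolation inequality does the rest mechanically.
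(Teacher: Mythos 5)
Your argument is essentially the proof on record: the paper itself gives no proof of Proposition \ref{pes} (it is quoted from \cite{W}), and the proof there runs exactly along your lines — the $\mathcal{J}_{\alpha}$ interpolation inequality $\|y\|_{\alpha}^{t}\le c(\alpha)\|y\|^{1-\alpha}\|A(t)y\|^{\alpha}$ combined with the dichotomy bounds (f) and the smoothing estimate (\ref{INe}), split over $t-s\le 1$ and $t-s>1$ via the cocycle and intertwining identities. The only step you assert rather than justify is the uniform boundedness of $A(s)\widetilde{U}_{Q}(s,s+1)Q(s+1)$ in part (ii); it follows by inserting one forward step, writing $\widetilde{U}_{Q}(s,t)Q(t)=U_{Q}(s,s-1)Q(s-1)\widetilde{U}_{Q}(s-1,t)Q(t)$, so that (\ref{INe}) applies to $A(s)U(s,s-1)$ and yields both $Q(s)\X\subset D(A(s))$ (needed to invoke the interpolation inequality at all) and the bound $\|A(s)\widetilde{U}_{Q}(s,t)Q(t)x\|\le C'Ne^{\delta}e^{-\delta(t-s)}\|x\|$.
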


\begin{remark}

Note that if an evolution family $\U$ is exponential stable, that is, there exists constants $N, \delta > 0$ such that $\|U(t,s)\| \leq N e^{-\delta(t-s)}$ for $t \geq s$, then its dichotomy projection $P(t) = I$ ($Q(t) = I - P(t) = 0$). In that case, Eq. (\ref{eq1.1}) still holds and can be rewritten as
follows: for all $x \in \X$,
\begin{equation}\label{eq1.100}
  \left\|U(t,s)x\right\|_{\alpha}^t\leq
 c(\alpha)e^{- \frac{\delta}{2}(t-s)}(t-s)^{-\alpha} \left\|x\right\|.
  \end{equation}
\end{remark}

\begin{remark}\label{RE} Note that 
if the evolution family $U(t,s)$ is compact for $t > s$ and is exponential stable, then it can be shown that for each given $t \in \R$ and $\tau > 0$, the family, $$\Big\{U(\cdot, s): \ s \in (-\infty, t-\tau)\Big\}$$is equi-continuous in $t$ for the uniform operator topology.
\end{remark}

\subsection{Pseudo-Almost Automorphic Functions}
Let $BC(\R, \X)$ stand for the Banach space of all bounded continuous functions $\ph: \R \mapsto \X$, which we equip with the sup-norm defined by
$\|\ph\|_\infty: = \sup_{t \in \R} \left\|\ph(t)\right\|$
for all $\ph \in BC(\R, \X)$.
Similarly, letting $\X_\alpha = (\X, D)_{\alpha, \infty}$ for $\alpha \in (0, 1)$, then the space $BC(\R, \X_\alpha)$ will stand for the Banach space of all bounded continuous functions $\ph: \R \mapsto \X_\alpha $, which we equip with the sup norm defined by
$\|\ph\|_{\alpha, \infty}: = \sup_{t \in \R} \left\|\ph(t)\right\|_\alpha$
for all $\ph \in BC(\R, \X_\alpha)$.

\begin{definition}\cite{TDbook} \label{DDD}
A function $f\in C(\R,\X)$ is said to be almost automorphic if for
every sequence of real numbers $(s'_n)_{n \in \N}$, there
   exists a subsequence $(s_n)_{n \in \N}$ such that
      $$ g(t):=\lim_{n\to\infty}f(t+s_n)$$
   is well defined for each $t\in\mathbb{R}$, and
      $$ \lim_{n\to\infty}g(t-s_n)=f(t)$$
   for each $t\in \mathbb{R}$.
\end{definition}

If the convergence above is uniform in $t\in \R$, then $f$ is
almost periodic in the classical Bochner's sense. Denote by
$AA(\X)$ the collection of all almost automorphic functions
$\R\mapsto \X$. Note that $AA(\X)$ equipped with the sup-norm
turns out to be a Banach space.

Among other things, almost automorphic functions satisfy the
following properties.

\begin{theorem}\cite{TDbook}\label{T}
   If $f, f_1, f_2\in AA(\X)$, then
   \begin{itemize}
      \item[(i)] $f_1+f_2\in AA(\X)$,
      \item[(ii)] $\lambda f\in AA(\X)$ for any scalar $\lambda$,
      \item[(iii)] $f_\alpha\in AA(\X)$ where $f_\alpha:\mathbb{R}\to \X$ is defined by
                     $f_\alpha(\cdot)=f(\cdot+\alpha)$,
      \item[(iv)] the range $\mathcal{R}_f:=\big\{f(t):t\in\mathbb{R}\big\}$ is relatively
                    compact in $\X$, thus $f$ is bounded in norm,
      \item[(v)] if $f_n\to f$ uniformly on $\mathbb{R}$ where each $f_n\in AA(\X)$, then $f\in
                   AA(\X)$ too.
   \end{itemize}
\end{theorem}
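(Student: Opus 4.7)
The plan is to prove (ii), (iii), and (iv) by direct application of Definition \ref{DDD}, then (i) via a two-stage extraction of subsequences, and finally (v) via a Cantor diagonal argument combined with uniform convergence estimates. Item (ii) is immediate since scalar multiplication commutes with pointwise limits, so the same $(s_n)$ witnessing almost automorphy of $f$ serves for $\lambda f$. For (iii), given $(s_n')$, I extract $(s_n)$ from Definition \ref{DDD} applied to $f$ so that $g(t):=\lim_n f(t+s_n)$ exists pointwise and $\lim_n g(t-s_n)=f(t)$; then $f_\alpha(t+s_n)=f((t+\alpha)+s_n)\to g(t+\alpha)=:g_\alpha(t)$, and a symmetric calculation gives $g_\alpha(t-s_n)\to f_\alpha(t)$.

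For (iv), given any sequence $x_n=f(t_n)$ in the range $\cal R_f$, I apply Definition \ref{DDD} with $s_n':=t_n$ to produce a subsequence $(s_n)\sub(t_n)$ and a function $g$ with $f(t+s_n)\to g(t)$ pointwise; evaluating at $t=0$ yields the convergent subsequence $f(s_n)\to g(0)$ of $(x_n)$. Hence $\cal R_f$ is relatively compact in $\X$, and in particular $f$ is bounded. For (i), given $(s_n')$ and $f_1,f_2\in AA(\X)$, I first apply the almost automorphy of $f_1$ to $(s_n')$ to extract $(s_n'')$ together with $g_1$ satisfying both the forward and reverse limits; then I apply the almost automorphy of $f_2$ to $(s_n'')$ to extract a further subsequence $(s_n)$ with corresponding $g_2$. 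Along the common $(s_n)$ both two-sided limits persist, so $g:=g_1+g_2$ witnesses $f_1+f_2\in AA(\X)$.

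The main obstacle is (v), where I need a Cantor diagonal extraction married to the uniform convergence $f_n\to f$. Given $(s_n')$, I inductively produce nested subsequences $(s_n^{(k)})\sub(s_n^{(k-1)})$ with $(s_n^{(k)})$ almost-automorphy-compatible for $f_k$, and then take the diagonal $s_n:=s_n^{(n)}$. For each fixed $k$, the tail $(s_n)_{n\ge k}$ is a subsequence of $(s_n^{(k)})$, so $f_k(t+s_n)\to g_k(t)$ pointwise. Using the uniform convergence $f_n\to f$, the estimate
\[
\bigl\|f(t+s_n)-g_k(t)\bigr\| \;\le\; \|f-f_k\|_\infty + \bigl\|f_k(t+s_n)-g_k(t)\bigr\|
\]
yields $\limsup_n\bigl\|f(t+s_n)-g_k(t)\bigr\|\le \|f-f_k\|_\infty$. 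Comparing two such bounds for indices $j,k$ forces $(g_k)$ to be uniformly Cauchy, so $g_k\to g$ uniformly for some continuous $g:\R\to\X$, and then $f(t+s_n)\to g(t)$ pointwise. The reverse limit $g(t-s_n)\to f(t)$ is obtained symmetrically. The delicate point is that pointwise limits do not generally commute with uniform limits; the uniform convergence $f_n\to f$ is exactly what makes this interchange rigorous, and this is where the hypothesis of item (v) is decisive.
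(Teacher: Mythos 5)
The paper does not prove this theorem; it is quoted from \cite{TDbook}, so there is no in-paper argument to compare against. Your proof is the standard one found in the literature on almost automorphic functions and it is correct: (ii), (iii) and (iv) follow directly from Definition \ref{DDD} as you say (for (iv), choosing $s_n'=t_n$ and evaluating at $t=0$ gives sequential relative compactness, which suffices in a Banach space); (i) is the usual two-stage subsequence extraction, valid because the limits for $f_1$ along $(s_n'')$ persist along any further subsequence; and (v) is the classical diagonal argument, with the key estimates
\[
\limsup_{n}\bigl\|f(t+s_n)-g_k(t)\bigr\|\le \|f-f_k\|_\infty,
\qquad
\sup_{t}\bigl\|g_j(t)-g_k(t)\bigr\|\le \|f-f_j\|_\infty+\|f-f_k\|_\infty,
\]
exactly as you state, giving a uniform limit $g$ of the $g_k$ and then both limits $f(t+s_n)\to g(t)$ and $g(t-s_n)\to f(t)$ by a $3\varepsilon$-argument. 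The only inaccuracy is your claim in (v) that the limit $g$ is continuous: the functions $g_k$ produced by Definition \ref{DDD} need not be continuous, so continuity of $g$ is not justified; fortunately it is also not required by the definition of $AA(\X)$, so this remark can simply be dropped without affecting the proof.
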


\begin{definition}\label{KKK}Let $\Y$ be another Banach space.
A jointly continuous function $F: \R \times \Y \mapsto \X$ is said
to be almost automorphic in $t \in \R$ if $t \mapsto F(t,x)$ is
almost automorphic for all $x \in K$ ($K \subset \Y$ being any
bounded subset). Equivalently, for every sequence of real numbers
$(s'_n)_{n \in \N}$, there
   exists a subsequence $(s_n)_{n \in \N}$ such that
      $$G(t, x):=\lim_{n\to\infty}F(t+s_n, x)$$
   is well defined in $t\in\mathbb{R}$ and for each $x \in K$, and
      $$ \lim_{n\to\infty}G(t-s_n, x)=F(t, x)$$
   for all $t\in \mathbb{R}$ and $x \in K$.

 The collection of such functions will be denoted by $AA(\R \times \X)$.
\end{definition}

We now introduce the notion of bi-almost automorphy, which in fact is due to Xiao {\it et al}. \cite{XJ}.

\begin{definition} 
A jointly continuous function $F: \T \mapsto \X$ is called positively bi-almost automorphic if for every sequence of real
numbers $(s'_n)_{n \in \N}$, we can extract a subsequence $(s_n)_{n \in \N}$ such that $$G(t,s): = \lim_{n \to \infty} F(t+s_n, s+s_n)$$ is well defined for $(t,s) \in \T$,
and $$\lim_{n \to \infty} G(t - s_n, s - s_n) = F(t, s)$$ for each $(t,s) \in \T$.

The collection of such functions will be denoted $bAA(\T, \X)$.
\end{definition}

 For
more on almost automorphic functions and their generalizations, we refer
the reader to the recent book by Diagana \cite{TDbook}.

Define
$PAP_0(\R, \X)$ as the collection of all functions $\ph \in BC(\R, \X)$ satisfying, $$\lim_{r \to \infty}
\displaystyle{\frac{1}{2r}} \int_{-r}^r \|\ph(s)\| ds =
0.$$

Similarly, $PAP_0(\R \times \X)$ will denote the collection of all
bounded continuous functions $F: \R \times \Y \mapsto \X$ such
that
$$\lim_{T \to \infty}
\displaystyle{\frac{1}{2r}} \int_{-r}^r \| F(s, x)\| ds =
0$$ uniformly in $x \in K$, where $K \subset \Y$ is any bounded
subset.

\begin{definition}\label{DEF} (Liang {\it et al.} \cite{L} and {\it Xiao et al.} \cite{LL}) 
A function $f \in BC(\R, \X)$ is called pseudo almost automorphic
if it can be expressed as $f = g + \phi,$ where $g \in AA(\X)$ and
$\phi \in PAP_0(\X)$. The collection of such functions will be
denoted by $PAA({\mathbb X})$.
\end{definition}

The functions $g$ and $\phi$ appearing in Definition~\ref{DEF} are
respectively called the {\it almost automorphic} and the {\it
ergodic perturbation} components of $f$.

\begin{definition} Let $\Y$ be another Banach space.
A bounded continuous function $F: \R \times \Y \mapsto \X$ belongs
to $AA(\R \times \X)$ whenever it can be expressed as $F = G + \Phi,$
where $G\in AA(\R \times \X)$ and $\Phi \in PAP_0(\R \times \X)$. The
collection of such functions will be denoted by $PAA(\R \times \X)$.
\end{definition}

A substantial result is the next theorem, which is due to Xiao et
al. \cite{LL}.

\begin{theorem}\label{MN} \cite{LL} The space $PAA(\X)$ equipped with the sup
norm $\|\cdot\|_\infty$ is a Banach space.
\end{theorem}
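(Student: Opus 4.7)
The plan is to show that $PAA(\X)$ is a closed linear subspace of the Banach space $(BC(\R,\X), \|\cdot\|_\infty)$, from which the result follows automatically. Linearity is immediate since both $AA(\X)$ (by Theorem~\ref{T}(i)-(ii)) and $PAP_0(\X)$ (directly from its definition) are linear subspaces of $BC(\R,\X)$, so $PAA(\X) = AA(\X) + PAP_0(\X)$ is linear. The real content is closedness under the sup-norm.

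The pivotal preliminary step is to prove that the decomposition $f = g + \phi$ with $g \in AA(\X)$ and $\phi \in PAP_0(\X)$ is unique, i.e.\ $AA(\X) \cap PAP_0(\X) = \{0\}$. Given $h$ in this intersection, the scalar function $t \mapsto \|h(t)\|$ is continuous, bounded (by Theorem~\ref{T}(iv)), real almost automorphic (composition with the continuous norm preserves almost automorphy), and has vanishing Ces\`aro mean. I would argue by contradiction: if $\|h(t_0)\| = 2c > 0$, continuity yields an interval $[t_0-\eta, t_0+\eta]$ on which $\|h\| \geq c$. Picking an arbitrary sequence $s_n' \to \infty$ and extracting a subsequence $(s_n)$ provided by Definition~\ref{DDD}, the limit $\widetilde g(t) := \lim_n \|h(t+s_n)\|$ exists pointwise with $\lim_n \widetilde g(t - s_n) = \|h(t)\|$; a standard translation argument then produces infinitely many disjoint translates of $[t_0-\eta,t_0+\eta]$ on which $\|h\| \geq c/2$, forcing a positive asymptotic lower density that contradicts $\lim_{r \to \infty}\frac{1}{2r}\int_{-r}^r \|h(s)\|\,ds = 0$.

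With uniqueness in hand, essentially the same argument yields the norm estimate $\|g\|_\infty \leq \|f\|_\infty$ for every $f = g + \phi \in PAA(\X)$: if $\|g(t_0)\| > \|f\|_\infty$, then $\|\phi\| = \|f-g\|$ is bounded below by a positive constant on a translation-invariant family of intervals, contradicting the mean-zero property of $\phi$. The triangle inequality then gives $\|\phi\|_\infty \leq 2\|f\|_\infty$, so both projections $f\mapsto g$ and $f\mapsto \phi$ are continuous.

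For closedness, let $(f_n) \subset PAA(\X)$ be Cauchy with decompositions $f_n = g_n + \phi_n$. The estimate above gives $\|g_n - g_m\|_\infty \leq \|f_n - f_m\|_\infty$, so $(g_n)$ is Cauchy in $BC(\R,\X)$ and converges uniformly to some $g$; by Theorem~\ref{T}(v), $g \in AA(\X)$. Set $f = \lim f_n$ and $\phi = f - g$; then $\phi_n \to \phi$ uniformly. To check $\phi \in PAP_0(\X)$, write
\[
\frac{1}{2r}\int_{-r}^r \|\phi(s)\|\,ds \leq \|\phi - \phi_n\|_\infty + \frac{1}{2r}\int_{-r}^r \|\phi_n(s)\|\,ds,
\]
let $r \to \infty$, then $n \to \infty$. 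Hence $f \in PAA(\X)$. The main obstacle is the key lemma $AA(\X)\cap PAP_0(\X) = \{0\}$ and the derived projection bound; both are classical but require carefully combining the non-uniform translation structure of almost automorphy with the asymptotic density information encoded in the Ces\`aro-mean condition.
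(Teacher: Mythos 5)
Your overall architecture is sound and is in fact the standard one: the paper does not prove Theorem~\ref{MN} but cites \cite{LL}, where completeness is obtained exactly as in your last paragraph, from a norm bound on the almost automorphic component (there derived from the range inclusion $g(\R)\subseteq \overline{f(\R)}$) together with Theorem~\ref{T}(v) and the $\varepsilon$-argument for the ergodic part. The genuine gap in your proposal is precisely the step you label ``standard'': the claim that $h\in AA(\X)\cap PAP_0(\X)$ with $\|h(t_0)\|=2c>0$ yields infinitely many disjoint translates of $[t_0-\eta,t_0+\eta]$ on which $\|h\|\ge c/2$, with positive lower density. Definition~\ref{DDD} gives only \emph{pointwise} convergence of translates, and almost automorphic functions need not be uniformly continuous; so from $\lim_n\widetilde g(t_0-s_n)=\|h(t_0)\|$ a diagonal extraction produces only individual points of the form $t_0-s_n+s_{m(n)}$ at which $\|h\|\ge c/2$ --- a set of measure zero, perfectly compatible with $\frac{1}{2r}\int_{-r}^r\|h(s)\|\,ds\to 0$. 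The translation argument you have in mind is the pseudo-almost-\emph{periodic} one, which relies on the uniform continuity of almost periodic functions and on the relative density of $\varepsilon$-almost periods to move a whole interval of fixed length; neither ingredient is available for $AA(\X)$.

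There is a second, independent defect in the same step: even granting infinitely many disjoint translates of a fixed-length interval carrying the lower bound, this does not force a positive asymptotic lower density (intervals centered at $2^n$ are infinitely many and disjoint, yet the Ces\`aro means still vanish); you would need the translates to be relatively dense, i.e.\ linearly many in $[-r,r]$, and an arbitrary sequence $s_n'\to\infty$ gives no such syndeticity. The same gap recurs in your derivation of $\|g\|_\infty\le\|f\|_\infty$, which is the quantitative heart of the closedness proof (note also that uniqueness of the decomposition by itself would not suffice; it is the norm bound that you actually use, and in \cite{LL} it is exactly the lemma $g(\R)\subseteq\overline{f(\R)}$ that supplies it). So the Cauchy-sequence part of your argument is correct, but the key lemma $AA(\X)\cap PAP_0(\X)=\{0\}$ and the bound $\|g\|_\infty\le\|f\|_\infty$ are asserted rather than proved; as written the proposal does not establish the theorem, and fixing it requires the finer argument of the cited source rather than the almost periodic translation device.
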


\begin{theorem}\cite{LL}\label{CO} If $\Y$ is another Banach space, $f: \R \times \Y \mapsto \X$ belongs to $PAA(\R \times \X)$
and if $x \mapsto f(t,x)$ is uniformly continuous on each bounded
subset $K$ of $\Y$ uniformly in $t \in \R$, then the function defined
by $h(t) = f(t, \varphi(t))$ belongs to $PAA(\X)$ provided
$\varphi \in PAA(\Y)$.
\end{theorem}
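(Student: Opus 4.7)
My plan is to exploit the decompositions coming from the definition of $PAA$ and split $h(t)=f(t,\varphi(t))$ into three pieces, each of which can be controlled separately. Write $f=G+\Phi$ with $G\in AA(\R\times\X)$ and $\Phi\in PAP_0(\R\times\X)$, and $\varphi=\alpha+\beta$ with $\alpha\in AA(\Y)$ and $\beta\in PAP_0(\Y)$. Then I rewrite
\[
h(t)=G(t,\alpha(t))+\Phi(t,\alpha(t))+\big[f(t,\varphi(t))-f(t,\alpha(t))\big].
\]
The goal is to show the first term lies in $AA(\X)$ and the remaining two lie in $PAP_0(\X)$, so that $h\in AA(\X)+PAP_0(\X)=PAA(\X)$. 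Continuity and boundedness of $h$ are immediate from the corresponding properties of $f$ and $\varphi$.

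For the first term, I would invoke the standard composition result for almost automorphic functions: since $G$ is uniformly continuous on bounded sets uniformly in $t$ (inherited from the same assumption on $f$, because $\Phi$ is the difference of two such functions and a short Chebyshev-type argument shows $\Phi$ inherits this), and $\alpha\in AA(\Y)$ has relatively compact range by Theorem~\ref{T}(iv), the function $t\mapsto G(t,\alpha(t))$ is almost automorphic; this is shown directly from Definition~\ref{KKK} by extracting a common subsequence $(s_n)$ witnessing both the almost automorphy of $\alpha$ and the $x$-uniform almost automorphy of $G$ on the compact set $\overline{\mathcal{R}_\alpha}$, then passing to the limit in $G(t+s_n,\alpha(t+s_n))$.

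The third term is handled by the uniform continuity of $f$ on bounded sets uniformly in $t$. Given $\ep>0$ pick $\de>0$ such that $\|x-y\|<\de$ forces $\|f(t,x)-f(t,y)\|<\ep$ uniformly in $t$, for $x,y$ in a bounded set containing $\mathcal{R}_\alpha\cup\mathcal{R}_\varphi$. Split $[-r,r]=E_r\cup E_r^c$ with $E_r=\{t\in[-r,r]:\|\beta(t)\|\ge\de\}$. A Chebyshev estimate gives
\[
\de\cdot m(E_r)\le\int_{-r}^{r}\|\beta(s)\|\,ds,
\]
and since $\beta\in PAP_0(\Y)$ the right-hand side is $o(r)$. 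Combined with the trivial bound $\|f(t,\varphi(t))-f(t,\alpha(t))\|\le 2\|f\|_\infty$ on $E_r$ and $\le\ep$ on $E_r^c$, one gets $\limsup_{r\to\infty}\frac{1}{2r}\int_{-r}^{r}\|f(t,\varphi(t))-f(t,\alpha(t))\|\,dt\le\ep$, hence the term lies in $PAP_0(\X)$.

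The main obstacle is the middle term $\Phi(\cdot,\alpha(\cdot))$, where $\Phi$ is only ergodic at each \emph{fixed} $x$ (uniformly on bounded subsets). To handle it I would exploit the compactness of $K:=\overline{\mathcal{R}_\alpha}$: given $\ep>0$, using uniform continuity of $\Phi$ on bounded sets (uniform in $t$), choose $\de>0$ and then a finite $\de$-net $\{x_1,\dots,x_N\}\subset K$. Writing $E_k=\{t:\|\alpha(t)-x_k\|<\de\}$ and disjointifying to obtain $E_k'$ with $\R=\bigsqcup_k E_k'$, one estimates
\[
\frac{1}{2r}\int_{-r}^{r}\|\Phi(t,\alpha(t))\|\,dt\le\ep+\sum_{k=1}^{N}\frac{1}{2r}\int_{-r}^{r}\|\Phi(t,x_k)\|\,dt.
\]
Each of the finitely many integrals on the right tends to $0$ as $r\to\infty$ since $\Phi(\cdot,x_k)\in PAP_0(\X)$, and because $\ep$ is arbitrary the full limit is $0$. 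This yields $\Phi(\cdot,\alpha(\cdot))\in PAP_0(\X)$, completing the proof.
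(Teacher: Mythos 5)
The paper never proves this theorem: it is imported verbatim from Xiao--Liang--Zhang \cite{LL} (see also \cite{L}), so your argument can only be compared with the standard proof in that literature --- which it essentially reproduces. The three-term splitting $h=G(\cdot,\alpha(\cdot))+\Phi(\cdot,\alpha(\cdot))+\bigl[f(\cdot,\varphi(\cdot))-f(\cdot,\alpha(\cdot))\bigr]$, the almost automorphy of the first term via a common subsequence on the compact set $\overline{\mathcal{R}_\alpha}$, the Chebyshev estimate showing the third term is in $PAP_0(\X)$, and the finite $\de$-net argument combined with ergodicity at the net points for the middle term are exactly the classical steps, and you execute each of them correctly (the bound $2\|f\|_\infty$ should more precisely be the supremum of $\|f\|$ over $\R\times K$ for a bounded $K$ containing both ranges, but that is cosmetic since $f\in PAA(\R\times\X)$ is bounded there).

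The one genuine soft spot is the parenthetical by which you grant yourself the uniform continuity of $G$ (and hence of $\Phi$) on bounded sets uniformly in $t$: as written (``$\Phi$ is the difference of two such functions'') it is circular, since whether $G$, equivalently $\Phi$, has this property is precisely what must be proved, and the property is load-bearing --- it is used both in the AA composition step and in the $\de$-net estimate for $\Phi(\cdot,\alpha(\cdot))$. The claim is true, but it requires its own lemma, proved along these lines: fix a bounded set $K$ and $\ep>0$, let $\de>0$ be the modulus of $f$ on $K$, and take $x,y\in K$ with $\|x-y\|<\de$; then
\begin{equation*}
\bigl(\|G(t,x)-G(t,y)\|-\ep\bigr)^{+}\;\le\;\|\Phi(t,x)\|+\|\Phi(t,y)\|,
\end{equation*}
where the left-hand side is a (scalar) almost automorphic function and the right-hand side lies in $PAP_0(\R,\R)$; hence the left-hand side lies in $AA(\R)\cap PAP_0(\R,\R)=\{0\}$ (the uniqueness-of-decomposition fact from \cite{LL}), which gives $\|G(t,x)-G(t,y)\|\le\ep$ for all $t\in\R$. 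With this lemma inserted (or simply quoted from Liang \emph{et al.}), your proof is complete and coincides with the standard one.
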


 For
more on pseudo-almost automorphic functions and their generalizations, we refer
the reader to the recent book by Diagana \cite{TDbook}.

\section{Main Results}Fix $\alpha \in (0, 1)$.
To study the existence of pseudo-almost automorphic mild
solutions to Eq. (\ref{PR1}), we will need the following assumptions,

\begin{enumerate}
  \item [(H.1)] The linear operators $\{A(t)\}_{t \in \R}$ with constant common domains denoted $D$, satisfy the Acquistapace--Terreni conditions.
  
Let ${\U} = \{U(t,s): (t, s) \in \T\}$ denote the evolution family associated with the linear operators $A(t)$.

\item[(H.2)] The evolution family $U(t,s)$ is not only compact for $t > s$ but also is exponentially stable, i.e., there exists constants $N, \delta > 0$ such that $$\Big\|U(t,s)\Big\| \leq N e^{-\delta(t-s)}$$ for $t \geq s$. 

\item[(H.3)] The function $\R \times \R \mapsto \X_\alpha$, $(t,s) \mapsto U(t,s) \ph$, belongs to $bAA (\T, \X_\alpha)$ for all $\ph\in \X_\alpha$. 

\item[(H.4)] The linear operators $B(t) \in B(BC(\R, \X_\alpha), \X)$ for all $t \in \R$. Moreover, the following hold,
\begin{enumerate}
\item[a)] $\displaystyle C_0 := \sup_{t \in \R} \Big\|B(t)\Big\|_{B(BC(\R, \X_\alpha), \X)} \leq \frac{1}{2d(\alpha)},$ where $d(\alpha) := c(\alpha) (2\delta^{-1})^{1-\alpha} \Gamma(1-\alpha)$.
\item[b)] For all $\ph \in PAA(\X_\alpha)$, the function $t \mapsto B(t) \varphi$ belongs to $PAA(\X)$.
\end{enumerate}
\item[(H.5)] 
The function $f: \R \times \X_\alpha \mapsto \X$ is pseudo-almost automorphic in the first variable uniformly in the second one. For each bounded subset $K \subset \X_\alpha$, $f(\R, K)$ is bounded. Moreover, the function $u \mapsto f(t,u)$ is uniformly continuous on any bounded
subset $K$ of $\X_\alpha$ for each $t \in \R$. Finally, we suppose that there exists $L > 0$ such that
$$\sup_{t \in \R, \ \ \|\ph\|_\alpha \leq L} \Big\|f(t,\ph)\Big\| \leq \frac{L}{ 2d(\alpha)}.$$
\item[(H.6)] If $(u_n)_{n \in \N} \subset PAA(\X_\alpha)$ is uniformly bounded and uniformly convergent upon every compact subset of $\R$, then $f(\cdot, u_n(\cdot))$ is relatively compact in $BC(\R, \X)$.
\end{enumerate}

Consider the nonautonomous first-order differential equation,
\begin{eqnarray}\label{LT}
\frac{d\ph}{dt} = A(t) \ph + g(t), \ \ t \in \R,
\end{eqnarray}
where $g: \R \mapsto \X$ is a bounded continuous function.

\begin{definition}
Under assumption (H.1), a continuous function $\ph: \R \mapsto \X$ is said to be a mild
solution to Eq. (\ref{LT}) provided that
\begin{eqnarray}\label{M}
\ph(t)=U(t,s) \ph(s) + \int_{s}^{t}U(t,\tau) g(\tau)d\tau, \quad \forall (t,s) \in \T.
\end{eqnarray}
\end{definition}

\begin{lemma}\label{R}
Suppose {\rm (H.1)} holds and that ${\mathcal U} = \{U(t,s): (t,s) \in \T\}$ has exponential dichotomy with constants $N$ and $\delta$. If 
$g: \R \mapsto \X$ is a bounded continuous function, then $\ph$ given by
\begin{eqnarray}\label{VCF1} \ph(t) := \int_{-\infty}^\infty \Gamma (t,s) g(s) ds\end{eqnarray}
for all $t\in \R$, is the unique bounded mild solution to Eq. (\ref{LT}).
\end{lemma}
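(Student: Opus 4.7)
The plan is to follow the standard three-step scheme for variation-of-constants formulas under exponential dichotomy: (i) show the Green's function integral converges and defines a bounded continuous function, (ii) verify that it satisfies the mild-solution identity \eqref{M}, and (iii) prove uniqueness using the dichotomy projections on the difference of two solutions.

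First I would establish that $\varphi$ is well-defined. Splitting
\[
\varphi(t) = \int_{-\infty}^{t} U(t,s)P(s)g(s)\,ds - \int_{t}^{\infty} U_Q(t,s)Q(s)g(s)\,ds
\]
and applying the bounds in (f) of Definition \ref{expo}, both integrands are dominated by $N e^{-\delta|t-s|}\|g\|_\infty$, so Bochner integrability follows and $\|\varphi\|_\infty \leq 2N\delta^{-1}\|g\|_\infty$. Continuity in $t$ follows from the strong continuity of $U(t,s)$ in $(t,s)$, the strong continuity of $P(t)$, and dominated convergence.

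The main step is verifying \eqref{M} for $t \geq s_0$. I would write
\[
U(t,s_0)\varphi(s_0) + \int_{s_0}^{t} U(t,\tau)g(\tau)\,d\tau
\]
and split the last integral as $\int_{s_0}^{t} U(t,\tau)P(\tau)g(\tau)\,d\tau + \int_{s_0}^{t} U(t,\tau)Q(\tau)g(\tau)\,d\tau$. Using (a) and (d) I would obtain $U(t,s_0)U(s_0,\tau)P(\tau) = U(t,\tau)P(\tau)$ for $\tau \leq s_0$, which combines the ``past'' portion of $U(t,s_0)\varphi(s_0)$ with the $P$-part integral to give $\int_{-\infty}^{t} U(t,\tau)P(\tau)g(\tau)\,d\tau$. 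For the $Q$-part, I would use the invertibility in (e), namely $U(t,s_0)U_Q(s_0,\tau)Q(\tau) = U_Q(t,\tau)Q(\tau)$, which allows the telescoping identity $U_Q(t,\tau)Q(\tau) = U(t,\tau)Q(\tau)$ for $\tau \in [s_0,t]$. After rearrangement, the $Q$-contribution collapses to $-\int_{t}^{\infty} U_Q(t,\tau)Q(\tau)g(\tau)\,d\tau$, so the total equals $\varphi(t)$. The main obstacle is this bookkeeping with $U_Q$ on the range of $Q$, together with carefully justifying the commutation $U(t,s_0)U_Q(s_0,\tau) = U_Q(t,\tau)$ for $\tau > s_0$ via the group property of $U_Q$ on $\bigcup_t Q(t)\X$.

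Finally, for uniqueness, suppose $\varphi_1, \varphi_2$ are two bounded mild solutions and set $\psi = \varphi_1 - \varphi_2$. Then $\psi(t) = U(t,s)\psi(s)$ for all $t \geq s$. Decomposing $\psi = P\psi + Q\psi$, I would use (d) to get $P(t)\psi(t) = U(t,s)P(s)\psi(s)$, and letting $s \to -\infty$ the bound in (f) yields $P(t)\psi(t) = 0$. For the $Q$-part, I would use $Q(s)\psi(s) = U_Q(s,t)Q(t)\psi(t)$ for $s \geq t$, so $\|Q(t)\psi(t)\| \leq Ne^{-\delta(s-t)}\|\psi\|_\infty$, and letting $s \to \infty$ gives $Q(t)\psi(t) = 0$. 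Hence $\psi \equiv 0$ and the bounded mild solution is unique.
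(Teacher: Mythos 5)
Your proposal is correct and follows essentially the same route as the paper: the uniqueness step---writing the difference of two bounded mild solutions as $w(t)=U(t,s)w(s)$, splitting with the dichotomy projections, and letting $s\to-\infty$ for the $P$-part and $t\to+\infty$ (after inverting on the $Q$-range) for the $Q$-part---is the paper's argument almost verbatim. The only difference is that the paper dispatches the existence/verification half by citing \cite[Chap.~4]{CH}, whereas you carry out the standard Green's-function bookkeeping explicitly; that computation is exactly what the cited reference supplies, so the two proofs coincide in substance.
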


\begin{proof} The fact that $\ph$ given in Eq. (\ref{VCF1}) is a bounded mild solution to Eq. (\ref{LT}) is clear, see, e.g., \cite[Chap 4]{CH}. 
Now let $u, v$ be two bounded mild solutions to Eq. (\ref{LT}). Setting 
$w = u -v$, one can easily see that $w$ is bounded and that
$w(t) = U(t,s) w(s)$ for all $(t,s) \in \T$. 
Now using property (d) from exponential dichotomy (Definition \ref{expo}) it follows that $$P(t) w(t) = P(t) U(t,s) w(s) = U(t,s) P(s) w(s),$$and hence
\begin{eqnarray*}
\|P(t) w(t)\| & =&\|U(t,s) P(s) w(s)\| \\
&\leq& N e^{-\delta(t-s)}\|w(s)\|\\
&\leq& N e^{-\delta(t-s)}\|w\|_\infty, \ \ \forall (t,s) \in \T.\\
\end{eqnarray*} 

Now, given $t \in \R$ with $t \geq s$, if we let $s \to -\infty$, we then obtain that $P(t) w(t) = 0$, that is, $P(t) u(t) = P(t) v(t)$. Since $t$ is arbitrary it follows that $P(t) w(t) = 0$ for all $t \geq s$.

Similarly, from $w(t) = U(t,s) w(s)$ for all $t \geq s$ and  property (d) from exponential dichotomy (Definition \ref{expo}) it follows that $$Q(t) w(t) = Q(t) U(t,s) w(s) = U(t,s) Q(s) w(s),$$ and hence $U_Q(s,t) Q(t) w(t) = Q(s) w(s)$ for all $t \geq s$. Moreover,
\begin{eqnarray*}\|Q(s) w(s)\| &=& \|U_Q(s,t) Q(t) w(t)\| \\
&\leq& Ne^{-\delta(t-s)}\|w\|_\infty, \ \ \forall (t,s) \in \T.\\ 
\end{eqnarray*} 

Now, given $s \in \R$ with $t \geq s$, if we let $t \to \infty$, we then obtain that $Q(s) w(s) = 0$, that is, $Q(s) u(s) = Q(s) v(s)$. Since $s$ is arbitrary it follows that $Q(s) w(s) = 0$ for all $t \geq s$. The proof is complete.
\end{proof}

\begin{definition}
Under assumptions (H.1), (H.2), and (H.4) and if 
$f: \R \times \X_\alpha \mapsto \X$ is a bounded continuous function, then a continuous function $\ph: \R \mapsto \X_\alpha$ satisfying 
\begin{eqnarray}\label{VCF} \ph(t) = U(t,s) \ph(s) + \int_{s}^{t} U(t,s) \Big[B(s) \ph(s) + f(s, \ph(s))\Big] ds,\quad \forall (t,s) \in \T\end{eqnarray}
is called a mild solution to Eq. (\ref{PR1}).
\end{definition}

Under assumptions (H.1), (H.2), and (H.4) and if 
$f: \R \times \X_\alpha \mapsto \X$ is a bounded continuous function, it can be shown that the function $\ph: \R \mapsto \X_\alpha$ defined by 
\begin{eqnarray}\label{VCF} \ph(t) = \int_{-\infty}^{t} U(t,s) \Big[B(s) \ph(s) + f(s, \ph(s))\Big] ds\end{eqnarray}
for all $t\in \R$, is a mild solution to Eq. (\ref{PR1}).

Define the following integral operator,
$$(S \ph)(t) =\int_{-\infty}^{t}U(t,s) \Big[B(s) \ph(s) + f(s, \ph(s))\Big] ds.$$

We need the next lemma to establish the main results of the paper.

\begin{lemma}\label{B}Under assumptions {\rm (H.1)--(H.2)--(H.4)} and if 
$f: \R \times \X_\alpha \mapsto \X$ is a bounded continuous function, then the mapping $S: BC(\R,\X_\alpha) \mapsto BC(\R, \X_\alpha)$ is well-defined and continuous.

\end{lemma}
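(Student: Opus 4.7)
The plan is to split the statement into two assertions: first, that for every $\ph \in BC(\R, \X_\alpha)$ the function $S\ph$ lies in $BC(\R, \X_\alpha)$, with a norm controlled by $\|\ph\|_{\alpha, \infty}$ and the bound of $f$; second, that the resulting map $S$ is continuous. The core analytic ingredient in both steps is the interpolation estimate $\|U(t,s)x\|_\alpha^t \leq c(\alpha)\, e^{-\delta(t-s)/2}(t-s)^{-\alpha}\|x\|$ from Eq.~\eqref{eq1.100}, which is available under (H.1)--(H.2) thanks to the remark following Proposition~\ref{pes}, combined with (H.4a) giving $\|B(s)\|_{B(BC(\R, \X_\alpha), \X)} \leq C_0$, and with the assumed boundedness of $f$.

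For well-definedness, I would fix $\ph \in BC(\R, \X_\alpha)$, set $M_\ph := \sup_{s \in \R}\|f(s, \ph(s))\| < \infty$, and apply Eq.~\eqref{eq1.100} pointwise to the integrand of $(S\ph)(t)$. Since $\|B(s)\ph\| \leq C_0 \|\ph\|_{\alpha, \infty}$ and $\|f(s,\ph(s))\| \leq M_\ph$ for every $s$, the substitution $r = t-s$ and the elementary identity $\int_0^\infty e^{-\delta r/2}r^{-\alpha}\, dr = (2/\delta)^{1-\alpha}\Gamma(1-\alpha)$ yield
\begin{equation*}
\|(S\ph)(t)\|_\alpha \;\leq\; c(\alpha)\bigl(C_0 \|\ph\|_{\alpha,\infty} + M_\ph\bigr)\cdot (2/\delta)^{1-\alpha}\Gamma(1-\alpha) \;=\; d(\alpha)\bigl(C_0 \|\ph\|_{\alpha,\infty} + M_\ph\bigr),
\end{equation*}
independently of $t$. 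Continuity of $t \mapsto (S\ph)(t)$ in the $\X_\alpha$-norm then follows by splitting the increment $(S\ph)(t+h) - (S\ph)(t)$ into an integral over $[t, t+h]$, which is $O(|h|^{1-\alpha})$ by the same estimate, and an integral over $(-\infty, t]$, on which one writes $U(t+h, s) - U(t,s) = [U(t+h, t) - I]U(t,s)$ and combines strong continuity of the evolution family with dominated convergence.

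For operator continuity, I would take $\ph_n \to \ph$ in $BC(\R, \X_\alpha)$ and apply the same estimate to the difference to obtain
\begin{equation*}
\|S\ph_n - S\ph\|_{\alpha, \infty} \;\leq\; d(\alpha)\, C_0 \|\ph_n - \ph\|_{\alpha,\infty} \;+\; d(\alpha) \sup_{s \in \R} \bigl\|f(s, \ph_n(s)) - f(s, \ph(s))\bigr\|.
\end{equation*}
The linear contribution vanishes immediately thanks to (H.4a). The remaining term is the main obstacle: mere continuity of $f$ does not in general control the supremum in $s$, so one must exploit the fact that $\{\ph_n(s), \ph(s) : s \in \R,\, n \in \N\}$ lies in a bounded subset of $\X_\alpha$ and invoke local uniform continuity of $f$ in its second argument uniformly in $s$ (in the spirit of the standing hypothesis (H.5) used throughout the paper) to convert the uniform convergence $\ph_n \to \ph$ into uniform convergence of the compositions $f(\cdot, \ph_n(\cdot)) \to f(\cdot, \ph(\cdot))$. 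Once this is granted, the estimate closes and $S$ is continuous.
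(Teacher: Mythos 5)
Your argument follows the paper's own proof essentially step for step: the same interpolation estimate \eqref{eq1.100} with the substitution $r=t-s$ and the $\Gamma(1-\alpha)$ computation for boundedness, and the same split of $Su_n-Su$ into the linear $B$-contribution (absorbed by (H.4a)) and the $f$-contribution for continuity. The one point of divergence is that you are more careful than the paper in two places: you supply the continuity-in-$t$ verification of $S\ph$ that the paper omits entirely, and in the operator-continuity step you correctly flag that the paper's appeal to ``continuity of $f$ plus dominated convergence'' only yields $\|Su_n(t)-Su(t)\|_\alpha\to 0$ pointwise in $t$ rather than in the sup-norm, so that the uniform continuity of $f$ on bounded sets (which is exactly what (H.5) provides in the paper's subsequent applications of this lemma) is indeed needed to close the estimate as you describe.
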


\begin{proof}
We first show that $S$ is well-defined and that $S (BC(\R, \X_\alpha)) \subset BC(\R, \X_\alpha)$. Indeed, letting $u \in BC(\R, \X_\alpha)$, $g(t) := f(t,u(t))$, and using Proposition \ref{pes},
we obtain
\begin{eqnarray*}
\big\|S u(t)\big\|_\alpha &\leq & \int_{-\infty }^t\big\|U(t,s) [B(s) u(t) + g(s)]
\big\|_\alpha ds\nonumber\\
& \leq & \int_{-\infty }^t
c(\alpha)e^{- \frac{\delta}{2}(t-s)}(t-s)^{-\alpha} 
\Big[\|B(s) u(s)\| + \|g(s)\|\Big]ds\nonumber\\
& \leq & \int_{-\infty }^t
c(\alpha)e^{- \frac{\delta}{2}(t-s)}(t-s)^{-\alpha} 
\Big[C_0\|u(s)\|_\alpha + \|g(s)\|\Big]ds\nonumber\\
& \leq & d(\alpha) \Big(C_0 \|u\|_{\alpha, \infty} + \|g\|_\infty\Big)\label{bound},
\end{eqnarray*} 
for all $t \in \R$, where $d(\alpha) =c(\alpha) (2\delta^{-1})^{1-\alpha} \Gamma(1-\alpha)$, and hence $Su: \R \mapsto \X_\alpha$ is bounded.

To complete the proof it remains to show that $S$ is continuous. For that, set$$F(s, u(s)) := B(s) u(s) + g(s) = B(s) u(s) + f(s, u(s)), \ \ \forall s \in \R.$$ 

Consider an arbitrary sequence of
functions
$u_n \in BC(\R, \X_\alpha)$ that converges uniformly to some $u \in BC(\R, \X_\alpha)$, that is,
$\big\|u_n -u\big\|_{\alpha, \infty} \to 0 \quad \mbox{as} \ \ n \to \infty.$

Now 
\begin{eqnarray*}
\big\|Su(t) - Su_n(t)\big\|_\alpha &=& \big\|\int_{-\infty}^t U(t,s) [F(s, u_n(s))-F(s, u(s))]\>ds\big\|_{\alpha}\\
&\leq& c(\alpha) \int_{-\infty}^t (t-s)^{-\alpha} e^{-\frac{\delta}{2}\>(t-s)}
\big\|F(s, u_n(s))-F(s, u(s))\big\|\>ds.\\
&\leq& c(\alpha) \int_{-\infty}^t (t-s)^{-\alpha} e^{-\frac{\delta}{2}\>(t-s)}
\big\|f(s, u_n(s))-f(s, u(s))\big\|\>ds\\
&+& c(\alpha) \int_{-\infty}^t (t-s)^{-\alpha} e^{-\frac{\delta}{2}\>(t-s)}
\big\|B(s) (u_n(s)- u(s))\big\|\>ds\\
&\leq& c(\alpha) \int_{-\infty}^t (t-s)^{-\alpha} e^{-\frac{\delta}{2}\>(t-s)}
\big\|f(s, u_n(s))-f(s, u(s))\big\|\>ds\\
&+& d(\alpha) C_0 \,\|u_n - u\|_{\alpha, \infty}.\\
\end{eqnarray*}

Using the continuity of the function $f: \R \times \X_\alpha \mapsto \X$ and the Lebesgue Dominated Convergence Theorem we conclude that
\begin{eqnarray*}
\big\|\int_{-\infty}^t U(t,s) P(s) [f(s, u_n(s))-f(s, u(s))]\>ds\big\| \to 0\>\>\>\mbox{as}\>\>\> n\to\infty.
\end{eqnarray*}
Therefore,
$\big\|S u_n - S u\big\|_{\alpha, \infty} \to 0$
as $n \to \infty$. The proof is complete.

\end{proof}

\begin{lemma}\label{POL} Under assumptions {\rm (H.1)---(H.5)}, then $S(PAA(\X_\alpha) \subset PAA(\X_\alpha)$.

\end{lemma}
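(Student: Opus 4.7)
The plan is to show that if $\ph\in PAA(\X_\alpha)$, then the function $h(s):=B(s)\ph(s)+f(s,\ph(s))$ lies in $PAA(\X)$, and then to split the convolution $(S\ph)(t)=\int_{-\infty}^{t}U(t,s)h(s)\,ds$ into an almost automorphic piece and an ergodic piece, treating each separately. The pseudo-almost automorphy of $h$ follows directly from the hypotheses: the summand $s\mapsto B(s)\ph(s)$ is in $PAA(\X)$ by (H.4)(b), while $s\mapsto f(s,\ph(s))$ is in $PAA(\X)$ by (H.5) together with the composition Theorem~\ref{CO}. Write $h=h_{1}+h_{2}$ with $h_{1}\in AA(\X)$ and $h_{2}\in PAP_{0}(\X)$, and set $\Phi(t):=\int_{-\infty}^{t}U(t,s)h_{1}(s)\,ds$, $\Psi(t):=\int_{-\infty}^{t}U(t,s)h_{2}(s)\,ds$.

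For the ergodic term, after the substitution $\tau=t-s$ and using Proposition \ref{pes}/Remark (see Eq.~\eqref{eq1.100}), the estimate
\[
\|\Psi(t)\|_{\alpha}\leq \int_{0}^{\infty}c(\alpha)\,e^{-\tfrac{\delta}{2}\tau}\tau^{-\alpha}\|h_{2}(t-\tau)\|\,d\tau
\]
is valid. Then Fubini gives
\[
\frac{1}{2r}\int_{-r}^{r}\|\Psi(t)\|_{\alpha}\,dt \leq c(\alpha)\int_{0}^{\infty}e^{-\tfrac{\delta}{2}\tau}\tau^{-\alpha}\Big[\frac{1}{2r}\int_{-r}^{r}\|h_{2}(t-\tau)\|\,dt\Big]\,d\tau,
\]
and since $PAP_{0}(\X)$ is translation invariant in the relevant sense, the bracketed average tends to $0$ as $r\to\infty$ for each fixed $\tau$ and is dominated by $\|h_{2}\|_{\infty}$; dominated convergence then yields $\Psi\in PAP_{0}(\X_\alpha)$.

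The harder step is showing $\Phi\in AA(\X_\alpha)$, which is where the bi-almost-automorphic hypothesis (H.3) must be combined with the almost automorphy of $h_{1}$. Given an arbitrary sequence $(s'_{n})$, I would first extract a subsequence $(s_{n})$ for which $h_{1}(t+s_{n})\to g_{1}(t)$ pointwise on $\R$, and then, via a diagonal argument over a countable dense subset of $\X_\alpha$ combined with the uniform estimate (\ref{eq1.100}), refine further so that $U(t+s_{n},t+s_{n}-\tau)x\to V(t,t-\tau)x$ in $\X_\alpha$ for every $x\in\X_\alpha$ and every $(t,\tau)$. Writing
\[
\Phi(t+s_{n}) = \int_{0}^{\infty}U(t+s_{n},t+s_{n}-\tau)h_{1}(t+s_{n}-\tau)\,d\tau,
\]
the integrand splits as
\[
U(t+s_{n},t+s_{n}-\tau)\bigl[h_{1}(t+s_{n}-\tau)-g_{1}(t-\tau)\bigr] + U(t+s_{n},t+s_{n}-\tau)g_{1}(t-\tau);
\]
the first piece is bounded in $\X_\alpha$-norm by $c(\alpha)e^{-\delta\tau/2}\tau^{-\alpha}$ times a null sequence, and the second converges in $\X_\alpha$ for each $\tau$ by (H.3).

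To pass the pointwise-in-$\tau$ convergence under the integral, I would invoke the Lebesgue dominated convergence theorem with the integrable majorant $c(\alpha)\|h_{1}\|_{\infty}e^{-\delta\tau/2}\tau^{-\alpha}$ coming from (\ref{eq1.100}); this yields $\Phi(t+s_{n})\to\widetilde\Phi(t):=\int_{0}^{\infty}V(t,t-\tau)g_{1}(t-\tau)\,d\tau$ in $\X_\alpha$ for each $t$. The symmetric argument with the roles reversed gives $\widetilde\Phi(t-s_{n})\to\Phi(t)$, so $\Phi\in AA(\X_\alpha)$. The main obstacle is precisely this step: obtaining a \emph{single} subsequence that simultaneously controls the almost automorphy of $h_{1}$ and the bi-almost automorphy of $U(t,s)$ evaluated at the (varying) vectors $g_{1}(t-\tau)$, for which the diagonal argument plus the uniform $\X_\alpha$-estimate (\ref{eq1.100}) is the essential technical ingredient. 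Combining the two conclusions, $S\ph=\Phi+\Psi\in PAA(\X_\alpha)$, which proves the lemma.
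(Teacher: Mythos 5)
Your proposal follows essentially the same route as the paper's own proof: compose via Theorem \ref{CO} and (H.4)(b) to get $h=B(\cdot)\ph(\cdot)+f(\cdot,\ph(\cdot))\in PAA(\X)$, split $h=h_1+h_2$, treat the ergodic part with the estimate of Proposition \ref{pes}, Fubini, translation invariance of $PAP_0(\X)$ and dominated convergence, and treat the almost automorphic part by a sequence argument combining (H.3) with the $\X_\alpha$-estimate and dominated convergence. If anything, your version is slightly more careful than the paper's (you introduce the limit family $V$ and the limit candidate $\widetilde\Phi$, whereas the paper compares directly with $\int_{-\infty}^{t}U(t,s)f_1(s)\,ds$), but the substance is the same.
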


\begin{proof} Let $u \in PAA(\X_\alpha)$ and define $h(s): = f(s, u(s)) + B(s) u(s)$ for all $s \in \R$. Using (H.5) and Theorem \ref{CO} it follows that the function $s \mapsto f(s, u(s))$ belongs to $PAA(\X)$.
Similarly, from (H.4), the function $s \mapsto B(s) u(s)$ belongs to $PAA(\X)$.
In view of the above, the function $s \mapsto h(s)$ belongs to $PAA(\X)$. Now write $h= h_1 + h_2 \in PAA (\X)$ where $h_1 \in AA (\X)$ and $h_2 \in PAP_0(\X)$ and set 
$$R h_j(t) :=  \int_{-\infty }^t
U(t,s) h_j(s)ds  \ \ \mbox{for all} \ \ t \in \R, \ \ j=1, 2.$$

Our first task consists of showing that $R \big(AA(\X)\big) \subset AA(\X_\alpha)$. Indeed,
using the fact that $h_1 \in AA (\X)$, for every
sequence of real numbers $(\tau'_n)_{n \in \N}$ there
   exist a subsequence $(\tau_n)_{n \in \N}$ and a function $f_1$ such that
      $$f_1 (t):=\lim_{n\to\infty} h_1 (t+\tau_n)$$
   is well defined for each $t\in\mathbb{R}$, and
      $$\lim_{n\to\infty} f_1 (t-\tau_n)= h_1 (t)$$
   for each $t\in \mathbb{R}$.

Now
\begin{eqnarray*}
(R h_1)(t + \tau_n) - (R f_1)(t) &=& \int_{-\infty}^{t+\tau_n} U(t
+\tau_n, s)
h_1(s)ds -  \int_{-\infty}^{t} U(t,s) f_1(s) ds\nonumber\\
&=& \int_{-\infty}^{t} U(t+\tau_n, s+\tau_n)
h_1(s+\tau_n)ds -\int_{-\infty}^{t} U(t,s)
f_1(s)ds.\nonumber\\
&=& \int_{-\infty}^{t} U(t+\tau_n, s+\tau_n)
\Big(h_1(s+\tau_n) - f_1(s)\Big)ds\nonumber\\
&+& \int_{-\infty}^{t} \Big(U(t+\tau_n, s+\tau_n) -  U(t,s)\Big) f_1(s) ds.\nonumber
\end{eqnarray*}

From Proposition \ref{pes} and the Lebesgue Dominated Convergence Theorem, it easily follows that
\begin{eqnarray*}
\Big\|\int_{-\infty}^{t} U(t+\tau_n, s+\tau_n) \Big(h_1(s+\tau_n) - f_1(s)\Big)ds \Big\|_\alpha &\leq& \int_{-\infty}^{t} \Big\|U(t+\tau_n, s+\tau_n) \Big(h_1(s+\tau_n) - f_1(s)\Big)\Big\|_\alpha ds \nonumber\\
&\leq&c(\alpha) \int_{-\infty}^{t} (t-s)^{-\alpha} e^{-\frac{\delta}{2}(t-s)}
\|h_1(s+\tau_n) - f_1(s)\| ds\\
&\to& 0 \ \ \mbox{as} \ \ n \to \infty.
\end{eqnarray*}

Similarly, from (H.3) and the Lebesgue Dominated Convergence Theorem, it follows 
\begin{eqnarray*}
\Big\|\int_{-\infty}^{t} ( U(t+\tau_n, s+\tau_n) - U(t,s)) f_1(s) ds\Big\|_\alpha &\leq& \int_{-\infty}^{t} \Big\|\Big(U(t+\tau_n, s+\tau_n) - U(t,s)\Big) f_1(s)\Big\|_\alpha ds\\
&\to& 0 \ \ \mbox{as} \ \ n \to \infty,
\end{eqnarray*}
and hence
$$(R f_1) (t) = \lim_{n \to \infty} (R h_1) (t+\tau_n)$$ for all $t \in
\R$.

Using similar arguments as above one obtains that 
$$(R h_1) (t) = \lim_{n \to \infty} (R f_1) (t-\tau_n)$$ for all $t \in
\R$, which yields, $t \mapsto (S h_1)(t)$ belongs to $AA(\X_\alpha)$.

The next step consists of showing that $R \big(PAP_0 (\X)\big) \subset PAP_0(\X_\alpha)$. Obviously, $R h_2 \in BC (\R, \X_\alpha)$ (see Lemma \ref{B}).
Using the fact that  $h_2 \in PAP_0(\X)$ and Proposition \ref{pes}  it can be easily shown that $(R h_2) \in PAP_0 (\X_\alpha)$. Indeed, for $r > 0$,
\begin{eqnarray} \displaystyle \frac{1}{2r} \int_{-r}^r \Big\|\int_{-\infty}^{t} U(t,s) h_2(s)ds\Big\|_\alpha  dt 
&\leq& \frac{c(\alpha)}{2r}\int_{-r}^{r}\int_0^{\infty}
e^{\frac{\delta}{2} s} s^{-\alpha} \Big\|h_2(t-s)\Big\|
dsdt\nonumber\\
&\leq& 
\int_0^{\infty} e^{\frac{\delta}{2} s} s^{-\alpha} \left(\frac{1}{2r}\int_{-r}^{r} \Big\|h_2(t-s)\Big\|
dt\right) ds.\nonumber
\end{eqnarray}

Using the fact that $PAP_0(\X)$ is translation-invariant it follows that
$$\displaystyle \lim_{r \to \infty} \frac{1}{2r}\int_{-r}^{r}
\Big\|h_2(t-s)\Big\| dt  = 0,$$ as $t \mapsto h_2(t-s) \in
PAP_0(\X)$ for every $s\in \R$. 

One completes the proof by
using the Lebesgue Dominated Convergence Theorem.
In summary, $(R h_2) \in  PAP_0 (\X_\alpha)$, which completes the proof.

\end{proof}

\begin{theorem}\label{AB}
Suppose assumptions {\rm (H.1)---(H.6)} hold, then Eq. (\ref{PR1}) has at least one pseudo-almost automorphic mild solution
\end{theorem}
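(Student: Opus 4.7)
My approach is to apply the Schauder fixed point theorem to the integral operator $S$ defined just before Lemma \ref{B}, on an appropriate closed bounded convex subset of $PAA(\X_\alpha)$. Let $B_L := \{u \in PAA(\X_\alpha) : \|u\|_{\alpha,\infty} \leq L\}$ with $L$ as in (H.5). Since $PAA(\X_\alpha)$ is a Banach space (Theorem \ref{MN}), $B_L$ is a closed convex subset. The estimate established in the proof of Lemma \ref{B}, combined with (H.4)(a) and (H.5), yields
\[
\|Su(t)\|_\alpha \leq d(\alpha)\bigl(C_0\|u\|_{\alpha,\infty} + \sup_{s\in \R}\|f(s,u(s))\|\bigr) \leq \frac{L}{2} + \frac{L}{2} = L
\]
for every $u \in B_L$, and Lemma \ref{POL} ensures that $Su \in PAA(\X_\alpha)$, so $S(B_L) \subseteq B_L$. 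The continuity of $S$ on $B_L$ is already proved in Lemma \ref{B}.

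The crux is to show that $S(B_L)$ is relatively compact in $PAA(\X_\alpha)$ for the sup norm, and I would argue this in three steps. \emph{(i) Pointwise compactness.} For each fixed $t \in \R$ and $\varepsilon > 0$, write $Su(t) = J_1(\varepsilon) + J_2(\varepsilon)$ with $J_2(\varepsilon) := \int_{t-\varepsilon}^t U(t,s)[B(s)u + f(s,u(s))]\,ds$; estimate \eqref{eq1.100} gives $\|J_2(\varepsilon)\|_\alpha \leq c\,\varepsilon^{1-\alpha}$ uniformly in $u \in B_L$, while $J_1(\varepsilon) = U(t,t-\varepsilon) \int_{-\infty}^{t-\varepsilon} U(t-\varepsilon,s)[\cdots]\,ds$ lies in a relatively compact subset of $\X_\alpha$, thanks to the compactness of $U(t,t-\varepsilon)$ from (H.2) and the uniform boundedness of the integrand. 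Hence $\{Su(t) : u \in B_L\}$ is totally bounded in $\X_\alpha$. \emph{(ii) Equicontinuity on compacta.} Writing $(Su)(t_2)-(Su)(t_1)$ as $\int_{t_1}^{t_2}U(t_2,s)F(s,u(s))\,ds + [U(t_2,t_1)-I]\int_{-\infty}^{t_1}U(t_1,s)F(s,u(s))\,ds$ and using the strong continuity of $U$ together with the integrable kernel $c(\alpha)(t-s)^{-\alpha}e^{-\delta(t-s)/2}$ yields equicontinuity on every compact interval. \emph{(iii) Sup-norm compactness on all of $\R$.} A diagonal extraction along exhausting compact intervals produces a subsequence $Su_{n_k}$ converging uniformly on compacta to some limit; hypothesis (H.6), combined with the uniform bound (H.4)(a) and the continuity of $B(\cdot)$, promotes $F(\cdot,u_{n_k}(\cdot))$ to a sequence convergent in $BC(\R,\X)$, and feeding this into the integral with the same integrable majorant upgrades the convergence of $Su_{n_k}$ to uniform convergence on all of $\R$.

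Once relative compactness of $S(B_L)$ is in hand, set $K := \overline{co}\,S(B_L)$. By Mazur's theorem $K$ is a compact convex subset of the Banach space $PAA(\X_\alpha)$, it sits inside $B_L$, and $S(K) \subseteq S(B_L) \subseteq K$. The Schauder fixed point theorem then produces a fixed point $u^* \in K$ of $S$, and by \eqref{VCF} this $u^*$ is a pseudo-almost automorphic mild solution to \eqref{PR1}.

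The principal obstacle is step (iii) above: upgrading uniform convergence on compacta to uniform convergence on $\R$. This is precisely where (H.6) is indispensable, since without a compactness-type hypothesis on $t\mapsto f(t,u_n(t))$, the Arzel\`a--Ascoli argument on bounded intervals does not by itself produce norm-compactness in $BC(\R,\X_\alpha)$; once (H.6) is invoked, the exponentially decaying kernel in $S$ provides the uniform tail control needed to close the compactness step.
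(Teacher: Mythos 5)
Your overall architecture is exactly the paper's: the same ball $B_L$, the same invariance estimate from (H.4)(a) and (H.5), the same $\varepsilon$-truncation $Su(t)=U(t,t-\varepsilon)(Su)(t-\varepsilon)+\int_{t-\varepsilon}^{t}U(t,s)F(s,u(s))\,ds$ for pointwise relative compactness, equicontinuity, and the passage to $\overline{co}\,S(B_L)$ before invoking Schauder. You are also right -- and more explicit than the paper, which simply asserts that $S$ is ``continuous and compact'' after the Arzel\`a--Ascoli step -- that the delicate point is upgrading uniform convergence on compacta to sup-norm convergence on all of $\R$, and that (H.6) is the hypothesis built for that purpose.

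There is, however, a quantifier slip in your step (iii) that makes it fail as written. You set out to prove that $S(B_L)$ itself is relatively compact: you take an arbitrary $(u_n)\subset B_L$, extract $Su_{n_k}$ convergent uniformly on compacta, and then invoke (H.6) to get compactness of $f(\cdot,u_{n_k}(\cdot))$ in $BC(\R,\X)$. But the hypothesis of (H.6) concerns the \emph{input} sequence: it requires $(u_{n_k})$ to be uniformly convergent on every compact subset of $\R$. An arbitrary sequence in the norm ball $B_L$ admits no such subsequence ($B_L$ is merely bounded, not equicontinuous or pointwise precompact), and convergence of $Su_{n_k}$ tells you nothing about convergence of $u_{n_k}$. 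The repair is precisely the structure you already have in your last paragraph: do not claim compactness of $S(B_L)$, but of $S(K)$ with $K=\overline{co}\,S(B_L)$. Elements of $K$ inherit the pointwise relative compactness and equicontinuity established in your steps (i)--(ii) (both properties pass to closed convex hulls), so every sequence $(v_n)\subset K$ has a subsequence converging uniformly on compacta; (H.6) then applies to $(v_{n_k})$ itself, and the integrable, exponentially decaying kernel converts convergence of $f(\cdot,v_{n_k}(\cdot))$ in $BC(\R,\X)$ into sup-norm convergence of $Sv_{n_k}$, giving $S(K)$ relatively compact and Schauder applicable on $K$. One further point worth a sentence in a complete write-up: the term $B(s)v_{n_k}$ depends on the whole trajectory $v_{n_k}\in BC(\R,\X_\alpha)$, so its convergence follows neither from (H.6) nor from uniform-on-compacta convergence of $v_{n_k}$ alone; one needs some continuity of $B(t)$ with respect to that weaker topology (available under (H.7) via dominated convergence). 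The paper is silent on this as well.
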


\begin{proof}  Let $B_\alpha = \{u \in PAA(\X_\alpha): \|u\|_\alpha \leq L\}$. Using the proof of Lemma \ref{B} one can easily show that $B_\alpha$ is convex and closed. Moreover, from Lemma \ref{POL}, one can see that $S(B_\alpha) \subset PAA(\X_\alpha)$. 
Now for all $u \in B_\alpha$,
\begin{eqnarray*}
\big\|S u(t)\big\|_\alpha &\leq & \int_{-\infty }^t\big\|U(t,s) [B(s) u(t) + g(s)]
\big\|_\alpha ds\nonumber\\
& \leq & \int_{-\infty }^t
c(\alpha)e^{- \frac{\delta}{2}(t-s)}(t-s)^{-\alpha} 
\Big[\|B(s) u(s)\| + \|f(s, u(s))\|\Big]ds\nonumber\\
& \leq & \int_{-\infty }^t
c(\alpha)e^{- \frac{\delta}{2}(t-s)}(t-s)^{-\alpha} 
\Big[C_0\|u(s)\|_\alpha + \|f(s, u(s))\|\Big]ds\nonumber\\
& \leq & d(\alpha) \Big(C_0 L + \frac{L}{2d(\alpha)}\Big)\\
& \leq & d(\alpha) \Big(\frac{ L}{2d(\alpha)} + \frac{L}{2d(\alpha)}\Big)\\
&=& L
\end{eqnarray*} 
for all $t \in \R$ and hence $Su \in B_\alpha$. 

To complete the proof, we have to prove the following:
\begin{enumerate}
\item[a)] That $V = \{ Su(t): u \in B_\alpha\}$ is a relatively compact subset of $\X_\alpha$ for each $t \in \R$;
\item[b)] That $W = \{ Su: u \in B_\alpha\} \subset PAA(\X_\alpha)$ is equi-continuous.
\end{enumerate}

To show a), fix $t \in \R$ and consider an arbitrary $\varepsilon > 0$.

Now
\begin{eqnarray*}
(S_{\varepsilon} u)(t) &:=& \int_{-\infty}^{t-\varepsilon}U(t,s) F(s, u(s))ds, \ u \in B_\alpha\\
&=& U(t, t-\varepsilon) \int_{-\infty}^{t-\varepsilon} U(t-\varepsilon, s) F(s, u(s)) ds, \ u \in B_\alpha\\
&=& U(t, t-\varepsilon) (S u)(t-\varepsilon), \ u \in B_\alpha
\end{eqnarray*}
and hence $V_\varepsilon := \{ S_{\varepsilon} u(t): u \in B_\alpha\}$ is relatively compact in $\X_\alpha$ as the evolution family $U(t, t-\varepsilon)$ is compact by assumption.

Now

\begin{eqnarray*}
&&\big\|S u(t) - U(t, t-\varepsilon) \int_{-\infty}^{t-\varepsilon} U(t-\varepsilon, s) F(s, u(s)) ds\big\|_\alpha \\
&&\leq  \int_{t-\varepsilon}^t \|U(t,s) F(s, u(s)) \|_\alpha ds\\
&&\leq  c(\alpha)\int_{t-\varepsilon}^t e^{- \frac{\delta}{2}(t-s)}(t-s)^{-\alpha} \left\|F(s, u(s))\right\| ds\\
&&\leq c(\alpha) \int_{t-\varepsilon}^t (t-s)^{-\alpha} e^{-\frac{\delta}{2}\>(t-s)}
\big\|g(s)\big\|\>ds + c(\alpha) \int_{t-\varepsilon}^t (t-s)^{-\alpha} e^{-\frac{\delta}{2}\>(t-s)}
\big\|B(s) u(s)\big\|\>ds\\
&&\leq c(\alpha) \int_{t-\varepsilon}^t (t-s)^{-\alpha} e^{-\frac{\delta}{2}\>(t-s)}
\big\|f(s, u(s))\big\|\>ds
+ c(\alpha) C_0 \|u\|_{\alpha, \infty} \int_{t-\varepsilon}^t (t-s)^{-\alpha} e^{-\frac{\delta}{2}\>(t-s)}\>ds\\
&&\leq c(\alpha) L\Big(d^{-1}(\alpha) + C_0\Big) \int_{t-\varepsilon}^t (t-s)^{-\alpha} \>ds\\
&&= c(\alpha) L\Big(d^{-1}(\alpha) + C_0\Big)\varepsilon^{1-\alpha} (1-\alpha)^{-1},\\
\end{eqnarray*}
and hence the set $V := \{Su(t): u \in B_\alpha\} \subset \X_\alpha$ is relatively compact.

The proof for b) follows along the same lines as in Ding {\it et al.} \cite[Theorem 2.6]{ding} and hence is omitted. 

Now since $B_\alpha$ is a closed convex subset of $PAA(\X_\alpha)$ and that $S(B_\alpha) \subset B_\alpha$, it follows that $\overline{co}\,{S(B_\alpha)} \subset B_\alpha.$ Consequently, 
$S(\overline{co}\, S(B_\alpha)) \subset S(B_\alpha) \subset \overline{co}\ S(B_\alpha).$ Further, it is not hard to see that $\{u(t): u \in \overline{co}\,{S(B_\alpha)}\}$ is relatively compact in $\X_\alpha$
for each fixed $t \in \R$ and that functions in $\overline{co}\,{S(B_\alpha)}$ are equi-continuous on $\R$. 
Using Arzel\`a-Ascoli theorem, we deduce that the restriction of $\overline{co}\,{S(B_\alpha)}$ to any
compact subset $I$ of $\R$ is relatively compact in $C(I, \X_\alpha)$. In summary, $S: \overline{co}\,{S(B_\alpha)} \mapsto \overline{co}\,{S(B_\alpha)}$ is continuous and compact. 
Using the Schauder fixed point it follows that $S$ has a fixed-point, which obviously is a pseudo-almost automorphic
mild solution to Eq. (\ref{PR1}).

\end{proof}

In order to study Eq. (\ref{PR}), we need the following additional assumption:
\begin{enumerate}
\item[(H.7)] There exists a function $\rho \in L^1(\R, (0, \infty))$ with $\displaystyle \|\rho\|_{L^1(\R, (0, \infty))} \leq \frac{1}{2d(\alpha)}$ such that
$$\Big\|C(t) \ph\Big\| \leq \rho(t) \big\|\ph\big\|_{\alpha}$$ for all $\ph \in \X_\alpha$ and $t \in \R$.
\end{enumerate}

\begin{corollary}\label{ABB}
Suppose assumptions {\rm (H.1)--(H.2)--(H.3)--(H.5)-(H.6)-(H.7)} hold, then Eq. (\ref{PR}) has at least one pseudo-almost automorphic mild solution
\end{corollary}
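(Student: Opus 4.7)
The plan is to deduce Corollary~\ref{ABB} directly from Theorem~\ref{AB} by showing that hypothesis (H.7) on the convolution kernel $C(\cdot)$ implies hypothesis (H.4) on the operator $B(t)\ph := \int_{-\infty}^t C(t-s)\ph(s)\,ds$. All other hypotheses are shared between the two statements, so once (H.4) is in hand, Theorem~\ref{AB} yields a pseudo-almost automorphic mild solution to Eq.~(\ref{PR1}), which by Eq.~(\ref{PR2}) is exactly a pseudo-almost automorphic mild solution to Eq.~(\ref{PR}).

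For (H.4)(a) I would use the pointwise bound of (H.7) directly: for every $\ph \in BC(\R,\X_\alpha)$ and every $t \in \R$,
\[
\|B(t)\ph\| \leq \int_{-\infty}^t \rho(t-s)\,\|\ph(s)\|_\alpha\,ds \leq \|\rho\|_{L^1(\R,(0,\infty))}\,\|\ph\|_{\alpha,\infty} \leq \frac{1}{2d(\alpha)}\,\|\ph\|_{\alpha,\infty},
\]
so $B(t)\in B(BC(\R,\X_\alpha),\X)$ with $C_0 \leq \frac{1}{2d(\alpha)}$, as required.

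For (H.4)(b), fix $\ph \in PAA(\X_\alpha)$ and decompose $\ph = \ph_1 + \ph_2$ with $\ph_1 \in AA(\X_\alpha)$ and $\ph_2 \in PAP_0(\X_\alpha)$. By linearity it suffices to verify that $t \mapsto B(t)\ph_1 \in AA(\X)$ and $t \mapsto B(t)\ph_2 \in PAP_0(\X)$. For the first, given an arbitrary sequence $(\tau'_n)$, extract $(\tau_n)$ with $\ph_1(\cdot+\tau_n)\to g_1(\cdot)$ pointwise and with the reverse shift recovering $\ph_1$; the substitution $u = s - \tau_n$ turns $B(t+\tau_n)\ph_1$ into
\[
B(t+\tau_n)\ph_1 = \int_{-\infty}^t C(t-u)\,\ph_1(u+\tau_n)\,du,
\]
and the integrable dominant $2\rho(t-u)\,\|\ph_1\|_{\alpha,\infty}$ together with the Lebesgue Dominated Convergence Theorem yields convergence to $\int_{-\infty}^t C(t-u)\,g_1(u)\,du$; the reverse limit is handled identically. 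For the ergodic component, Fubini and a translation produce
\[
\frac{1}{2r}\int_{-r}^r \|B(t)\ph_2\|\,dt \leq \int_0^{\infty} \rho(u) \left(\frac{1}{2r}\int_{-r}^r \|\ph_2(t-u)\|_\alpha\,dt\right) du,
\]
and the translation-invariance of $PAP_0(\X_\alpha)$ sends the inner average to $0$ as $r \to \infty$ for each fixed $u$, with $\rho(u)\,\|\ph_2\|_{\alpha,\infty}$ providing an $L^1$ dominant; a final DCT completes the argument.

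The main obstacle is the almost automorphic step: one has to exploit the convolution form $C(t-s)$ to convert a shift by $\tau_n$ in $t$ into a shift in the argument of $\ph_1$, and then recognize that $\rho \in L^1$ is precisely the integrable dominant needed to pass to the limit under the integral sign. Once these observations are made, both pieces of (H.4) follow from (H.7) and the corollary is an immediate consequence of Theorem~\ref{AB}.
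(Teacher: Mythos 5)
Your proposal is correct and follows essentially the same route as the paper: it reduces the corollary to Theorem~\ref{AB} by showing (H.7) implies (H.4), using the $L^1$ bound on $\rho$ for the operator norm, and then splitting $\ph$ into its almost automorphic and ergodic components, handling the first by the change of variables $u=s-\tau_n$ plus dominated convergence with dominant $\rho$, and the second by the translation invariance of $PAP_0(\X_\alpha)$ plus dominated convergence. No gaps.
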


\begin{proof} It suffices to check that (H.7) yields (H.4) in the case when the bounded linear operators $B(t)$ are defined by
$$B(t) \ph  := \int_{-\infty}^t C(t-s) \ph (s) ds$$for all $t \in \R$ and $\ph \in BC(\R,\X_\alpha)$ with $\X_\alpha = (\X, D)_{\alpha, \infty}$.

Indeed, since $\rho$ is integrable, it is clear that the operators $B(t)$ belong to $B(BC(\R, \X_{\alpha}), \X)$ for all $t \in \R$ with $\|B(t)\|_{B(BC(\R, \X_{\alpha}), \X)} \leq \|\rho\|_{L^1(\R, (0,\infty))}$. In fact, we can take $C_0 = \|\rho\|_{L^1(\R, (0,\infty))}$. To complete the proof, we have to show that the function $\R \mapsto \X$, $t \mapsto B(t) \ph$ is pseudo-almost automorphic for any $\ph \in PAA(\X_\alpha)$. For that, write $\ph = \ph_1 + \ph_2$, where $\ph_1 \in AA(\X_\alpha)$ and $\ph_2 \in PAP_0(\X_\alpha)$. 
Using the fact that the function $t \mapsto \ph_1(t)$ belongs to $AA (\X_\alpha)$, for every
sequence of real numbers $(\tau'_n)_{n \in \N}$ there
   exist a subsequence $(\tau_n)_{n \in \N}$ and a function $\psi_1$ such that
      $$\psi_1 (t):=\lim_{n\to\infty} \ph_1(t+\tau_n)$$
   is well defined for each $t\in\mathbb{R}$, and
      $$\lim_{n\to\infty} \psi_1 (t-\tau_n)= \ph_1 (t)$$
   for each $t\in \mathbb{R}$.

Now
\begin{eqnarray*}
B (t + \tau_n)\ph_1 - B (t) \psi_1 &=& \int_{-\infty}^{t+\tau_n} C(t
+\tau_n - s)
\ph_1 (s)ds -  \int_{-\infty}^{t} C(t-s) \psi_1(s) ds\nonumber\\
&=& \int_{-\infty}^{t} C(t -s)
\ph_1(s+\tau_n)ds -\int_{-\infty}^{t} C(t-s)
\psi_1(s)ds\nonumber\\
&=& \int_{-\infty}^{t} C(t-s)
\Big(\ph_1(s+\tau_n) - \psi_1(s)\Big)ds\nonumber\\
\end{eqnarray*}
and hence
\begin{eqnarray*}\Big\|B(t + \tau_n)\ph_1 - B(t)\psi_1\Big\| &\leq& \Big\|\int_{-\infty}^{t} C(t-s)
\Big(\ph_1(s+\tau_n) - \psi_1(s)\Big)ds\Big\|\\
&\leq& \int_{-\infty}^{t} \Big\|C(t-s)
\Big(\ph_1(s+\tau_n) - \psi_1(s)\Big)\Big\| ds\\
&\leq& \int_{-\infty}^{t} \rho(t-s)
\Big\|\ph_1(s+\tau_n) - \psi_1(s)\Big\|_\alpha ds\\
\end{eqnarray*}
which by Lebesgue Dominated Convergence Theorem yields
$$\lim_{n \to \infty} \Big\|B(t + \tau_n)\ph_1 - B(t) \psi_1\Big\| = 0.$$

Using similar arguments, we obtain
$$\lim_{n \to \infty} \Big\|B(t - \tau_n)\psi_1 - B(t) \ph_1\Big\| = 0.$$

For $r > 0$,
\begin{eqnarray} \displaystyle \frac{1}{2r} \int_{-r}^r \Big\|\int_{-\infty}^{t} C(t-s) \ph_2(s)ds\Big\|  dt 
&\leq& \frac{1}{2r}\int_{-r}^{r}\int_0^{\infty}
\rho(s) \Big\|\ph_2(t-s)\Big\|_\alpha
dsdt\nonumber\\
&\leq& 
\int_0^{\infty} \rho(s) \left(\frac{1}{2r}\int_{-r}^{r} \Big\|\ph_2(t-s)\Big\|_\alpha
dt\right) ds.\nonumber
\end{eqnarray}

Now using the translation invariance of the space $PAP_0(\X_\alpha)$, it follows that,
$$\displaystyle \lim_{r \to \infty} \frac{1}{2r}\int_{-r}^{r}
\Big\|\ph_2(t-s)\Big\|_\alpha dt  = 0,$$ as $t \mapsto \ph_2(t-s) \in
PAP_0(\X_\alpha)$ for every $s\in \R$. 

Therefore,
$$\lim_{r \to \infty} \frac{1}{2r} \int_{-r}^r \Big\|\int_{-\infty}^{t} C(t-s) \ph_2(s)ds\Big\|  dt = 0$$
by
using the Lebesgue Dominated Convergence Theorem.

\end{proof}

\section{Example}
Fix $\alpha \in (0, 1)$. Let $\Omega \subset \R^N$ be an open bounded set with smooth boundary $\partial \Omega$ and
let $\X = L^2(\Omega)$ be equipped with its natural norm
$\|\cdot\|_{L^2(\Omega)}$ defined for all $\varphi \in L^2(\Omega)$ by
$$\|\varphi\|_{L^2(\Omega)} = \Bigg(\int_{\Omega} \|\varphi(s)\|^2 ds\Bigg)^{1/2}.$$

Motivated by natural phenomena such as population genetics \cite{V2, V3} or nerve pulse propagation \cite{V1}, in this section we study the existence of pseudo-almost automorphic solutions to the parabolic Neumann boundary value problem given by,
\begin{eqnarray}\label{D1000} \hspace{.6cm} \frac{\partial\ph}{\partial t}(t,x) &=& a(t) \Delta\ph(t,x) + \int_{-\infty}^t b(t-s) \ph (s,x) ds + f(t, \ph(t,x))\\&+& \eta a(t) \ph(t,x), \ \  (t,x)\in \R \times \Omega,\nonumber
\end{eqnarray}
\begin{eqnarray}\label{D2000}
\frac{\partial \ph}{\partial n}(t,x) &=& 0, \ \ (t,x)\in \R \times \partial \Omega,\end{eqnarray}
where $\eta > 0$ is a constant, $a: \R \mapsto \R$ and $b: \R \mapsto (0, \infty)$ are functions, the function $f: \R \times L^2(\Omega) \mapsto L^2(\Omega)$ is pseudo-almost automorphic in $t \in \R$ uniformly in $\varphi \in L^2(\Omega)$, and $\Delta$ stands for the usual Laplace operator in the space variable $x$.

Letting
$$A_\eta(t) \ph = a(t) (\Delta +\eta) \ph \ \ \mbox{for all} \ \ \ph \in D(A_\eta(t)) = D(\Delta) = \Big\{\ph \in H^2 (\Omega): \frac{\partial \ph}{\partial n} = 0 \ \ \mbox{on} \ \ \partial \Omega\Big\},$$
$$C(t)\ph = b(t) \ph \ \ \mbox{for all} \ \ \ph \in D(C(t)) = D(\Delta),$$ and $f(t,\ph) = h(t,\ph)$, one can easily see that Eq. (\ref{PR}) is exactly
the nonautonomous parabolic Neumann boundary value problem
formulated in 
Eqs. (\ref{D1000})-(\ref{D2000}).

This setting requires the following assumptions,
\begin{enumerate}
\item[(H.8)] The function $a: \R \mapsto (0, \infty)$ is almost automorphic with $$\displaystyle \inf_{t \in \R} a(t) = a_0 > 0.$$
\item[(H.9)] The function $b: \R \mapsto (0, \infty)$ belongs to $L^1(\R, (0, \infty))$ with $$\displaystyle \int_{-\infty}^\infty b(s) ds \leq \frac{1}{2\widetilde{C} d(\alpha)}$$where
$\widetilde{C}$ is the bound of the continuous injection $L_\alpha^2(\Omega):= (L^2(\Omega), D(\Delta))_{\alpha, \infty} \hookrightarrow L^2(\Omega)$.
\end{enumerate}

Setting
$A_\eta \ph = - (\Delta +\eta) \ph$ for all $\ph \in D(A_\eta)=D(\Delta),$ one can easily see that $A_\eta (t) = - a(t) A_\eta$. Of course, $-A_\eta$ is a sectorial operator on $L^2(\Omega)$. Let $(T(t))_{t \geq 0}$ be the analytic semigroup generated by the operator $-A_\eta$. It is well-known that the semigroup $T(t)$ is not only compact for $t > 0$ but also is exponentially stable as
$$\|T(t)\|  \leq  e^{-\eta t}$$ for all $t \geq 0$.

In view of the above,
it is now clear that the evolution family $U(t,s)$ associated with $A_\eta(t)$ is given by 
$$U(t,s) = T\Bigg(\int_{s}^{t} a(r)dr\Bigg)$$
 for all $t \geq s, \ t, s \in \R$
as $$\displaystyle U(t,s)-U(\tau,s)= \Big[ T\Big(\int_{\tau}^{t} a(r)dr\Big)-I\Big] T\Big(\int_{s}^{\tau} a(r)dr\Big)$$for all $t>\tau$ and $t, \tau \in \R$.

 Further, under assumptions (H.8), the compactness of the semigroup and the exponential stability of $T(t)$ it follows that the evolution family $U(t,s)$ is not only compact for $t > s$ but also is exponentially stable as
$$\|U(t,s)\| \leq e^{-\eta a_0 (t-s)}$$
for all $t,s \in \R$ with $t \geq s$.

Additionally, the functions $(t,s) \mapsto U(t,s) \ph$, $\R \times \R \mapsto L_\alpha^2(\Omega)$ belongs to $bAA (\T, L_\alpha^2(\Omega))$ for all $\ph\in L_\alpha^2(\Omega)$.

Now

\begin{eqnarray*}
\|C(t) \varphi\|_{L^2(\Omega)} &=& b(t) \|\varphi\|_{L^2(\Omega)}\\
&\leq&  \widetilde{C}b(t) \|\ph \|_{\alpha}\\
\end{eqnarray*}
for all $\displaystyle \ph \in L_{\alpha}^2(\Omega)$ and $t \in \R$.

In view of the above, it is clear that assumptions (H.1)-(H.2)-(H.3)-(H.4)-(H.7) are fulfilled. Therefore, using Corollary \ref{ABB}, we obtain the following theorem.

\begin{theorem}\label{ABC}
Under assumptions {\rm (H.5)--(H.6)--(H.8)--(H.9)}, then the system  Eqs. (\ref{D1000})-(\ref{D2000}). has at least one pseudo-almost automorphic mild solution
\end{theorem}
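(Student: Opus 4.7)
The plan is to verify the hypotheses of Corollary \ref{ABB} for the abstract formulation of the parabolic Neumann problem on $\X = L^2(\Omega)$ and then invoke that corollary. Since (H.5) and (H.6) are assumed directly, only (H.1), (H.2), (H.3), and (H.7) remain to be checked; (H.4) will then be supplied automatically by (H.7), as was already shown in the proof of Corollary \ref{ABB}. I would recast (\ref{D1000})--(\ref{D2000}) abstractly with $A(t) = a(t)(\Delta + \eta)$ on the constant Neumann domain $D = D(\Delta)$ and $C(t)\ph = b(t)\ph$, exactly as sketched in the preamble to the theorem.

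For (H.1), because $-(\Delta+\eta)$ is self-adjoint on $L^2(\Omega)$, hence sectorial, multiplication by the positive scalar $a(t)$ preserves the resolvent estimate (\ref{AT1}) uniformly in $t$, with constants depending only on the sector angle and on the uniform bounds $a_0 \le a(t) \le \|a\|_\infty$. Constancy of the domains reduces the commutator bound (\ref{AT2}) to a scalar estimate controlled by $|a(t)-a(s)|$, which holds with exponents $\mu = \nu = 1$ by continuity and boundedness of $a$. For (H.2), the explicit formula $U(t,s) = T\bigl(\int_s^t a(r)\,dr\bigr)$ combined with the lower bound $\int_s^t a(r)\,dr \ge a_0(t-s)$ immediately transfers the compactness for $t>s$ and the exponential stability $\|U(t,s)\| \le e^{-\eta a_0(t-s)}$ from the corresponding properties of the analytic semigroup $T$.

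The main technical point is (H.3). Given a sequence $(\tau_n')_n$, I would first extract from the almost automorphy of $a$ a subsequence $(\tau_n)_n$ along which $a(\cdot + \tau_n) \to \widetilde a(\cdot)$ pointwise. Dominated convergence then yields $\int_{s+\tau_n}^{t+\tau_n} a(r)\,dr = \int_s^t a(r+\tau_n)\,dr \to \int_s^t \widetilde a(r)\,dr$ for every $(t,s) \in \T$, and the strong continuity of $T(\cdot)$ on $L^2(\Omega)$ together with the uniform bound $\|T(\tau)\| \le e^{-\eta \tau}$ forces $U(t+\tau_n, s+\tau_n)\ph \to \widetilde U(t,s)\ph$ in $L^2(\Omega)$ for every $\ph \in L_\alpha^2(\Omega)$. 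To upgrade this convergence to the interpolation norm $\|\cdot\|_\alpha$, I would exploit the smoothing of $T(\tau)$ into $D(\Delta) \hookrightarrow L_\alpha^2(\Omega)$ for $\tau > 0$, together with the equi-continuity noted in Remark \ref{RE}. The reverse limit is handled by applying the same argument to $-\tau_n$, giving bi-almost automorphy.

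Verifying (H.7) is a one-line computation: the continuous injection $L^2_\alpha(\Omega) \hookrightarrow L^2(\Omega)$ with constant $\widetilde C$ gives $\|C(t)\ph\|_{L^2} = b(t)\|\ph\|_{L^2} \le \widetilde C\, b(t)\|\ph\|_\alpha$, so that $\rho(t) := \widetilde C\, b(t)$ satisfies $\|\rho\|_{L^1} = \widetilde C\, \|b\|_{L^1} \le 1/(2d(\alpha))$ by (H.9). With (H.1)--(H.3) and (H.7) in hand and (H.5)--(H.6) assumed, Corollary \ref{ABB} delivers the desired pseudo-almost automorphic mild solution.
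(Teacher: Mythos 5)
Your proposal follows the paper's proof exactly: recast Eqs.~(\ref{D1000})--(\ref{D2000}) in the abstract form with $A_\eta(t)=a(t)(\Delta+\eta)$ and $C(t)\ph=b(t)\ph$, verify (H.1)--(H.3) and (H.7) (noting, as the paper does, that (H.7) supplies (H.4)), and invoke Corollary~\ref{ABB}; in fact the paper merely asserts (H.1)--(H.3) and only computes the (H.7) estimate, so you supply strictly more detail, in particular for the bi-almost automorphy of $U(t,s)$. One caveat: your claim that the commutator bound (\ref{AT2}) holds with $\mu=1$ ``by continuity and boundedness of $a$'' actually requires Lipschitz (or at least H\"older) continuity of $a$, which (H.8) does not literally provide --- but the paper glosses over the same point, so this does not distinguish your argument from the one in the text.
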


\bibliographystyle{amsplain}

\begin{thebibliography}{10}
\bibitem{aq}
P. Acquistapace, Evolution operators and strong solutions of abstract linear parabolic equations. {\it Differential Integral
Equations} {\bf 1} (1988), no. 4, pp. 433--457.


\bibitem{at}
P. Acquistapace and B. Terreni, A unified approach to abstract linear nonautonomous parabolic equations. {\it Rend. Sem.
Mat. Univ. Padova} {\bf 78} (1987), pp. 47--107.


\bibitem{adez}
M. Adimy and K. Ezzinbi, A class of linear partial neutral functional-differential equations with nondense domain. {\it J. Diff. Eqns.} {\bf 147} (1998), pp. 285--332.

\bibitem{AG}
R.P. Agarwal, T. Diagana, and E. Hern\`andez,
Weighted pseudo almost periodic solutions to some partial neutral functional differential equations. {J. Nonlinear Convex Anal.} {\bf 8} (2007), no. 3, pp. 397--415.


\bibitem{Am}
H. Amann, \textit{Linear and quasilinear parabolic problems}.
Birkh\"{a}user, Berlin 1995.


\bibitem{AA}
M. Anguiano, T. Caraballo, J. Real, and J. Valero, Pullback attractors for a nonautonomous integro-differential equation with memory in some unbounded domains. {\it Internat. J. Bifur. Chaos Appl. Sci. Engrg}. {\it 23} (2013), no. 3, 1350042, 24 pp.


\bibitem{W}
M. Baroun, S. Boulite, T. Diagana, and L. Maniar,  Almost periodic
solutions to some semilinear non-autonomous thermoelastic plate
equations. {\it J. Math. Anal. Appl.} {\bf  349}(2009), no. 1,
pp. 74--84.

\bibitem{CC}
J. C. Chang, Solutions to non-autonomous integrodifferential equations with infinite delay. {\it J. Math. Anal. Appl.} {\bf 331} (2007), no. 1, pp. 137--151.

\bibitem{C}
G. Chen and R. Grimmer, Ronald, Integral equations as evolution equations. {\it J. Differential Equations} {\bf 45} (1982), no. 1, pp. 53--74.

\bibitem{CH}
C. Chicone and Y. Latushkin, {\it Evolution semigroups in dynamical systems and differential equations}.
Mathematical Survey and Monographs, Vol. {\bf 70}, Amer. Math. Soc., 1999.

\bibitem{cor}
C. Corduneanu, {\it Almost periodic functions}.
AMS Chelsea Publishing, 1989.

\bibitem{da1}
G. Da Prato and M. Iannelli, Existence and regularity for a class of integro-differential equations of parabolic type, {\it J. Math. Anal. Appl.} {\bf 112} (1985), no. 1, pp. 36--55. 

\bibitem{da2}
G. Da Prato and A. Lunardi, Solvability on the real line of a class of linear Volterra integro-differential equations of parabolic type, {\it Ann. Mat. Pura Appl.} (4) {\bf 150} (1988), pp. 67--117. 


\bibitem{da3}
G. Da Prato and A. Lunardi, Hopf bifurcation for nonlinear integro-differential equations in Banach spaces with infinite delay. {\it Indiana Univ. Math. J.} {\bf 36} (1987), no. 2, pp. 241--255.

\bibitem{TDbook}
  T. Diagana, \emph{Almost automorphic type and almost periodic type functions in abstract spaces.} Springer, 2013, New York, XIV, 303 p.


\bibitem{MCM}
T. Diagana, Almost periodic solutions for some higher-order nonautonomous differential equations with operator coefficients. {\it Math. Comput. Modelling} {\bf 54} (2011), no. 11-12, pp. 2672--2685.


\bibitem{TT}
T. Diagana, {\it Pseudo almost periodic functions in Banach
spaces}. Nova Science Publishers, Inc., New York, 2007.

\bibitem{d6}
T. Diagana, Existence of pseudo almost periodic solutions to some
classes of partial hyperbolic evolution equations. {\it Electron.
J. Qual. Theory Differ. Equ.} 2007, No. 3, 12 pp.

\bibitem{PAMS}
T. Diagana, Almost periodic solutions to some second-order nonautonomous differential equations. {\it Proc. Amer. Math. Soc.} {\bf 140} (2012), pp. 279--289.

\bibitem{DE}
T. Diagana, H. R. Henriquez, and E. M. Hern\`{a}ndez, Almost automorphic mild solutions to some partial neutral functional-differential equations and applications. {\it Nonlinear Anal.} {\bf 69} (2008), no. 5-6, pp. 1485--1493.


\bibitem{DE1}
T. Diagana, E. Hern\`{a}ndez, J. P. C. dos Santos, Existence of asymptotically almost automorphic
solutions to some abstract partial neutral integro-differential equations. {\it Nonlinear Anal.} {\bf 71} (2009), no. 1, pp. 248--257.

\bibitem{DE2}
T. Diagana, E. Hern\`{a}ndez, Existence and uniqueness of pseudo almost periodic solutions to some abstract partial neutral functional-differential equations and applications. {\it J. Math. Anal. Appl.} {\bf 327} (2007), no. 2, pp. 776--791.

\bibitem{ding}
H. S. Ding, J. Liang, G. M. N'Gu\'er\'ekata, T. J. Xiao, Mild pseudo-almost periodic solutions of nonautonomous semilinear evolution equations. {\it Math. Comput. Modelling} {\bf 45} (2007), no. 5-6, pp. 579--584.

\bibitem{F1}
A. Friedman, Monotonicity solution of Volterra integral equations in Banach space,
{\it Trans. Amer. Math. Soc.} {\bf 138} (1969), pp. 129--148.

\bibitem{F2}
A. Friedman and M. Shinbrot, Volterra integral equations in Banach space. {\it Trans.
Amer. Math. Soc.} {\bf 126} (1967), pp. 131--179.



\bibitem{HR}
M. L. Heard and S. M. Rankin III, A semilinear parabolic Volterra integro-differential equation. {\it J. Differential Equations} {\bf 71} (1988), no. 2, pp. 201--233.

\bibitem{G}
M.E. Gurtin and A.C. Pipkin, A general theory of heat conduction with infinite wave speed. {\it Arch. Rat. Mech. Anal}. {\bf 31} (1968), pp. 113--126.


\bibitem{H0}
E. Hern\`andez M.,
$C^\alpha$-classical solutions for abstract non-autonomous integro-differential equations. {\it Proc. Amer. Math. Soc.} {\bf 139} (2011), pp. 
4307--4318.

\bibitem{H1}
E. Hern\`andez M., M. L. Pelicer, and J. P. C. dos Santos,
Asymptotically almost periodic and almost periodic solutions for a
class of evolution equations, {\it Electron. J. Differential Equations} {\bf
2004}(2004), no. 61, pp. 1--15.


\bibitem{H2}
E. Hern\`andez M. and J. P. C. dos Santos, Asymptotically almost periodic and almost periodic solutions for a class of partial integrodifferential equations. {\it Electron. J. Differential Equations} (2006), No. 38, 8p. 

\bibitem{li}
H. X. Li, F. L. Huang, and J. Y. Li, Composition of pseudo almost-periodic functions and semilinear differential equations. {\it J. Math. Anal. Appl.} {255} (2001), no. 2, pp. 436--446.

\bibitem{L}
J. Liang, J. Zhang, and T-J. Xiao, Composition of pseudo almost
automorphic and asymptotically almost automorphic functions. {\it
J. Math. Anal. Appl.} {\bf 340} (2008), pp. 1493--1499.


\bibitem{LLL}
J. Liang, G. M. N'Gu\'er\'ekata, T-J. Xiao, and J. Zhang, Some
properties of pseudo almost automorphic functions and applications
to abstract differential equations. {\it Nonlinear Anal.} {\bf 70}
(2009), no. 7, pp. 2731--2735.

\bibitem{J}
J. H. Liu, Integrodifferential equations with non-autonomous operators. {\it Dynam. Systems Appl.} {\bf 7} (1998), no. 3, pp. 427--439.


\bibitem{li1}
C. Lizama and R. Ponce, Almost automorphic solutions to abstract Volterra equations on the line. {\it Nonlinear Anal.} {\bf 74} (2011), no. 11, pp. 3805--3814.

\bibitem{li2}
C. Lizama and G. M. N'Gu\'er\'ekata, Bounded mild solutions for semilinear
integro differential equations in Banach spaces. {\it Integral Equations Operator Theory} {\bf 68} (2010), no. 2, pp. 207--227.

\bibitem{Lun}
 A. Lunardi, {\it Analytic semigroups and optimal regularity in parabolic problems}. Progress in Nonlinear Differential Equations and their Applications, 16. Birkh\"{a}user Verlag, Basel, 1995. 

\bibitem{Lun2}
A. Lunardi, Regular solutions for time dependent abstract integro-differential equations with singular kernel. {\it J. Math. Anal. Appl.} {\bf 130} (1988), no. 1, pp. 1--21

\bibitem{Lun4}
A. Lunardi and E. Sinestrari, $C\sp \alpha$-regularity for nonautonomous linear integro-differential equations of parabolic type. {\it J. Differential Equations} {\bf 63} (1986), no. 1, pp. 88--116.

\bibitem{M}
L. Maniar and A. Rhandi, Nonautonomous retarded wave equations. {\it J. Math. Anal. Appl.} {\bf 263} (2001), no. 1, pp. 14--32.


\bibitem{NO}
J. A. Nohel, {\it Nonlinear Volterra equations for heat flow in materials with memory}.
MRC Rech. Summary Report $\#$2081, Madison, WI.

\bibitem{N}
J. W. Nunziato, On heat conduction in materials with memory. {\it Quart. Appl. Math.} {\bf 29} (1971), pp. 187--204.

\bibitem{pruss}
J. Pr\"{u}ss, Evolutionary integral equations and applications, Monographs in Mathematics, vol. 87, Birkh\"{a}user Verlag, Basel, 1993. 

\bibitem{V1}
P. A. Vuillermot, Global exponential attractors for a class of almost-periodic parabolic equations in ${\bf R}\sp N$. {\it Proc. Amer. Math. Soc.} {\bf 116} (1992), no. 3, pp. 775--782.

\bibitem{V2}
P. A. Vuillermot, Almost-periodic attractors for a class of nonautonomous reaction-diffusion equations on ${\bf R}\sp N$. II. Codimension-one stable manifolds. {\it Differential Integral Equations} {\bf 5} (1992), no. 3, pp. 693--720.

\bibitem{V3}
P. A. Vuillermot, Almost periodic attractors for a class of nonautonomous reaction-diffusion equations on ${\bf R}\sp N$. I. Global stabilization processes. {\it J. Differential Equations} {\bf 94} (1991), no. 2, pp. 228--253. 

\bibitem{LL}
T. J. Xiao, J. Liang, J. Zhang, Pseudo almost automorphic solutions
to semilinear differential equations in Banach spaces. {\it
Semigroup Forum} {\bf 76} (2008), no. 3, pp. 518--524.

\bibitem{XJ}
T. J. Xiao, X-X. Zhu, J. Liang, Pseudo-almost automorphic mild
solutions to nonautonomous differential equations and
applications. {\it Nonlinear Anal.} {\bf 70} (2009), no. 11,
pp. 4079--4085.

\bibitem{webb}
G. F. Webb, An abstract semilinear Volterra integrodifferential equation, {\it Proc. Amer. Math. Soc.} {\bf 69} (1978), no. 2, pp. 255--260






\end{thebibliography}

\end{document}